\documentclass[11pt]{article}
\usepackage{amssymb,amsmath,setspace,graphicx,multicol,wrapfig,amsfonts,amsthm,titling,url,array,parskip,enumitem,color,bbm}
\usepackage{hyperref}
\usepackage{placeins}
\usepackage{aurical,pbsi}
\usepackage[T1]{fontenc}
\usepackage[hmargin=2.5cm,vmargin=2.5cm]{geometry}

\clubpenalty= 10000           
\widowpenalty= 10000          
\displaywidowpenalty= 10000

\setcounter{page}{1}
\pagenumbering{arabic}
\numberwithin{equation}{section}

\newtheoremstyle{slplain}
  {3pt} 
  {3pt} 
  {\slshape}
  {}
  {\bfseries}
  {.}
  { }
  {}

\theoremstyle{slplain}
\newtheorem{thm}{Theorem}[section]
\newtheorem{lem}[thm]{Lemma}

\newtheorem{cor}[thm]{Corollary}

\theoremstyle{slplain}
\newtheorem{defn}{Definition}[section]

\newtheorem*{example}{Example}

\theoremstyle{remark}
\newtheorem*{remark}{Remark}

\makeatletter
\AtBeginDocument{%
  \expandafter\renewcommand\expandafter\subsection\expandafter
    {\expandafter\@fb@secFB\subsection}%
  \newcommand\@fb@secFB{\FloatBarrier
    \gdef\@fb@afterHHook{\@fb@topbarrier \gdef\@fb@afterHHook{}}}%
  \g@addto@macro\@afterheading{\@fb@afterHHook}%
  \gdef\@fb@afterHHook{}%
}
\makeatother

\DeclareMathOperator{\wt}{wt}

\DeclareMathOperator{\Prob}{Prob}
\DeclareMathOperator{\size}{size}

\DeclareMathOperator{\lrs}{lrs}
\DeclareMathOperator{\rls}{rls}

\DeclareMathOperator{\weight}{weight}
\DeclareMathOperator{\shape}{shape}
\DeclareMathOperator{\lar}{large}
\DeclareMathOperator{\sma}{small}

\newcommand*\hole{\includegraphics[width=0.1in]{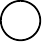}}
\newcommand*\heavy{\includegraphics[width=0.1in]{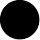}}
\newcommand*\light{\includegraphics[width=0.1in]{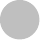}}

\begin{document}
\begin{center} {\Large{\sc Rhombic alternative tableaux and assembl\'{e}es of permutations}} \\
\vspace{0.1in}
Olya Mandelshtam\footnote{\noindent University of California Berkeley, Phone: +1 (949) 689-5748, E-mail: olya@math.berkeley.edu} and Xavier Viennot\footnote{\noindent LaBRI, Universit\'{e} Bordeaux, Phone: +33 (0)5 4000-6085, 
E-mail: viennot@xavierviennot.org}
 \end{center}

\begin{abstract}
In this paper, we introduce the \emph{rhombic alternative tableaux}, whose weight generating functions provide combinatorial formulae to compute the steady state probabilities of the two-species ASEP. In the ASEP, there are two species of particles, one \emph{heavy} and one \emph{light}, hopping right and left on a one-dimensional finite lattice with open boundaries. Parameters $\alpha$, $\beta$, and $q$ describe the hopping probabilities. The rhombic alternative tableaux are enumerated by the Lah numbers, which also enumerate certain \emph{assembl\'{e}es of permutations}. We describe a bijection between the rhombic alternative tableaux and these assembl\'{e}es. We also provide an insertion algorithm that gives a weight generating function for the assembl\'{e}es. Combined, these results give a bijective proof for the weight generating function for the rhombic alternative tableaux, which is also the partition function of the two-species ASEP at $q=1$.

\smallskip
\noindent \textbf{Keywords.} rhombic alternative tableaux, Lah numbers, assembl\'{e}es, ASEP, multispecies
\end{abstract}

 \section{Introduction}\label{sec_intro}
 
\emph{Rhombic alternative tableaux} were recently introduced by the authors in \cite{omxgv} to provide a combinatorial interpretation for the steady state probabilities of the two-species asymmetric simple exclusion process (ASEP). These tableaux are enumerated by the \emph{Lah numbers} ${n \choose r} \frac{(n+1)!}{(r+1)!}$. \emph{Assembl\'{e}es of permutations} are a generalization of permutations which are also enumerated by the Lah numbers. Our main result in this paper is a bijection that relates the rhombic alternative tableaux to the assembl\'{e}es while preserving certain statistics on the tableaux.

The ASEP is a model from statistical physics that describes the dynamics of interacting particles hopping right and left on a one-dimensional finite lattice with open boundaries. This model was originally introduced in the 1960's by biologists and mathematicians (see for example the survey papers \cite{blythe, chou} for the connection with biology). Since its introduction, the ASEP has received a lot of attention as an important example of a non-equilibrium process that exhibits boundary induced phase transitions, and for many other reasons, including its connection to orthogonal polynomials, the XXZ model, the formation of shocks, and more.

The classical ASEP with three parameters is defined by the following hopping probabilities: particles may enter at the left of the lattice with rate $\alpha$, they may exit at the right with rate $\beta$, and in the bulk the probability of hopping left is $q$ times that of hopping right. The stationary probability of a state of a Markov chain is the limit as time goes to infinity of the probability of observing that state. In other words, the vector of stationary probabilities of all the states is proportional to the left eigenvector with eigenvalue 1 of the transition matrix of the Markov chain. The ASEP is a Markov chain whose states are configurations of particles and holes on a finite lattice. Much past work has been devoted for finding combinatorial interpretations for the steady state probabilities of the ASEP in terms of various kinds of tableaux (permutation tableaux \cite{cw2007}, alternative tableaux  \cite{slides}, tree-like tableaux  \cite{treelike}, staircase tableaux  \cite{cw2011}). 

\begin{figure}[h]
\centering
\includegraphics[width=0.6\textwidth]{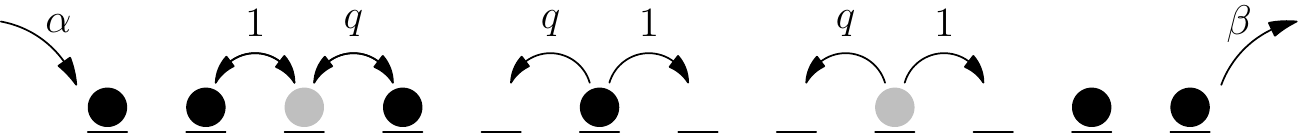}
\caption{The parameters of the two-species ASEP.}
\noindent
\label{2_parameters}
\end{figure}

The more general ASEP with five parameters $\alpha,\beta,\gamma,\delta$, and $q$ has a strong connection to the moments of the Askey-Wilson polynomials (see \cite{cssw,usw}), which are at the top of the hierarchy of the classical orthogonal polynomials in one variable. Recently, \cite{cantini,koornwinder} revealed a fascinating connection between moments of Koornwinder-Macdonald polynomials and a \emph{two-species generalization} of the ASEP with the same five parameters $\alpha,\beta,\gamma,\delta$, and $q$. Koornwinder polynomials are an important class of multi-variate orthogonal polynomials that generalize the Askey-Wilson polynomials. This two-species ASEP (studied in \cite{ayyer,duchi,uchiyama}, among others), has two species of particles, \emph{heavy} and \emph{light}, with the \emph{heavy} particles able to enter and exit at the boundaries. In the bulk, adjacent particles can swap, with rate 1 if the heavier particle is on the left, and rate $q$ if it is on the right, as in Figure \ref{2_parameters}. The recent connection with Koornwinder polynomials generated much interest in finding combinatorial objects that provide formulae for the two-species ASEP to expand upon the results for the single-species case. This was accomplished by the rhombic alternative tableaux \cite{omxgv} for the three-parameter ASEP, and the rhombic staircase tableaux \cite{cmw} for the most general five-parameter ASEP in recent work. 

In this paper we restrict ourselves to the case of the two-species ASEP with the parameters $\alpha,\beta$, and $q$. The two-species ASEP is a Markov chain on the states $X \in \{\heavy,\light,\hole\}^n$ with exactly $r$ \light's. (When $r=0$, we reduce to the original ASEP.) We call this set of states $B_n^r$, with $|B_n^r| = {n \choose r} 2^{n-r}$. See \cite{uchiyama, omxgv} for a more precise definitions of the transitions on $B_n^r$.


The rhombic alternative tableaux (RAT) are a two-species analog of the \emph{alternative tableaux} \cite{slides}, which interpret the probabilities of the original ASEP. These tableaux are certain fillings with $\alpha$'s, $\beta$'s, and $q$'s of rhombic tilings of closed shapes that correspond to states in $B_{n,r}$ of the two-species ASEP. We call the weight-generating function over the set of RAT corresponding to states in $B_{n,r}$ the \emph{partition function}, and we denote it by $\mathcal{Z}_{n,r}$. Enumeration of the RAT gives the following formula for $\mathcal{Z}_{n,r}$ \cite{omxgv}:
\begin{equation}\label{Z_nr}
\mathcal{Z}_{n,r} = (\alpha\beta)^{n-r} {n \choose r} \prod_{i=1}^{n-1} \left(\frac{1}{\alpha}+\frac{1}{\beta} + i\right).
\end{equation}

A canonical bijection introduced by the second author \cite{slides} called \emph{fusion-exchange} relates the alternative tableaux to permutations, as we can see in Figure \ref{fusion_examples} (a). At $r=0$, the RAT are precisely the alternative tableaux, and the set of assembl\'{e}es specializes to the set of permutations. In Section \ref{sec_bij} we extend the fusion-exchange bijection to relate the RAT to the assembl\'{e}es, as in Figure \ref{fusion_examples} (b). In Section \ref{sec_assemblee} we show that weighted enumeration of the assembl\'{e}es also yields the expression in Equation \eqref{Z_nr}. Consequently, we obtain a bijective proof for Equation \eqref{Z_nr}.

\begin{figure}[h]
\centering
\includegraphics[width=0.9\textwidth]{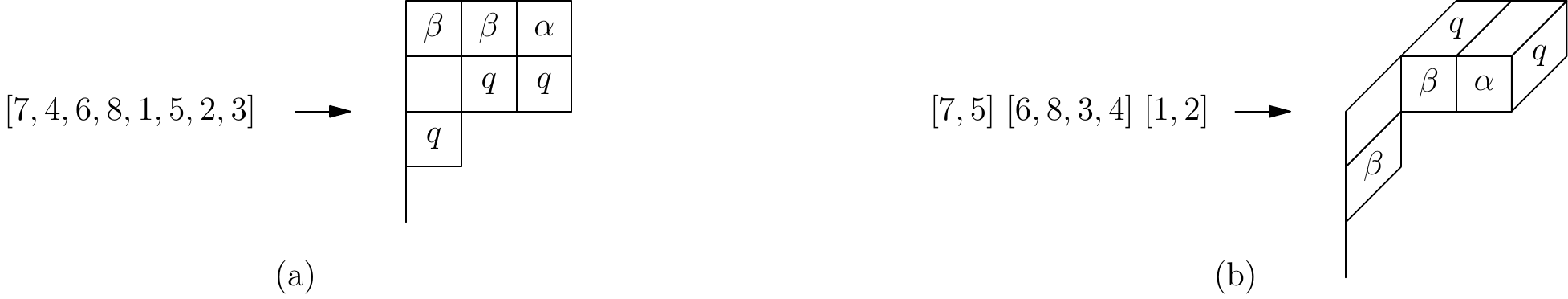}
\caption{(a) a permutation to an alternative tableau, and (b) an assembl\'{e}e to a RAT.}
\noindent
\label{fusion_examples}
\end{figure}

This paper is the full version of \cite{fpsac}, an extended abstract.

\textit{Acknowledgement.} The first author is grateful for the mentorship of Lauren Williams and Sylvie Corteel. The first author also acknowledges the hospitality of LIAFA, the Chateaubriand Fellowship, and the Berkeley-France fund, which made this work possible.

\section{Preliminaries}


Let $X \in \{\heavy,\light,\hole\}^n$ with exactly $r$ $\light$'s be a word describing a state of the two-species ASEP in $B_n^r$. 

\begin{defn}
The \emph{rhombic diagram} of \emph{type} $X$, where $X$ has $k$ \heavy's and $\ell$ \light's with $k+\ell+r=n$, is a closed shape whose northwest boundary is a path consisting of $\ell$ west edges followed by $r$ southwest edges followed by $k$ south edges. The southeast boundary is a path that is constructed by reading $X$ from left to right and drawing a west edge for a \hole, a southwest edge for a \light, and a south edge for a \heavy.  All the edges are of unit length, and the northeast and southwest corners of the northwest and southeast boundary are joined. This diagram is denoted by $\Gamma(X)$.
\end{defn}

\begin{defn}
A \emph{tiling} of $\Gamma(X)$, denoted by $\mathcal{T}$, is a covering of $\Gamma(X)$ by three types of rhombic tiles: the square, the tall rhombus, and the short rhombus, as pictured in Figure \ref{tiles}. 
\end{defn}

\begin{figure}[!ht]
 \begin{minipage}{0.3\linewidth}
  \centerline{\includegraphics[height=1.7in]{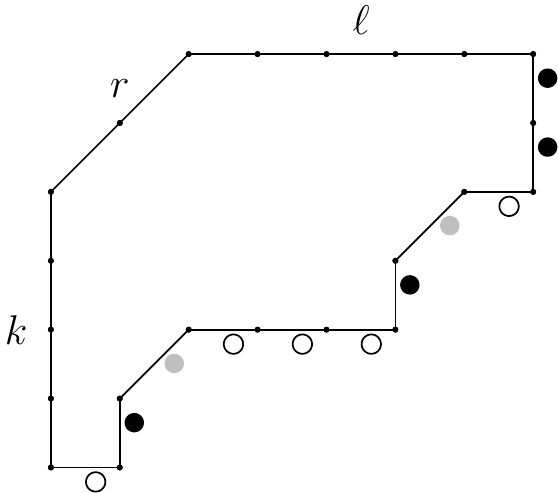}}
\centering
 \caption{$\Gamma(X)$ for the state $X=\protect\heavy\ \protect\heavy\ \protect\hole\ \protect\light\ \protect\heavy\ \protect\hole\ \protect\hole\ \protect\hole\ \protect\light\ \protect\heavy\ \protect\hole$.}\label{gamma_diag}
 \end{minipage}
\hfill
 \begin{minipage}{0.15\linewidth}
  \centerline{\includegraphics[height=1.5in]{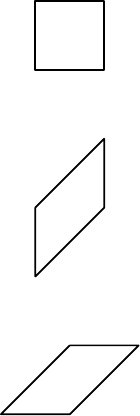}}
 \centering
  \caption{Tiles}\label{tiles}
 \end{minipage}
\hfill
 \begin{minipage}{0.45\linewidth}
  \centerline{\includegraphics[height=1.5in]{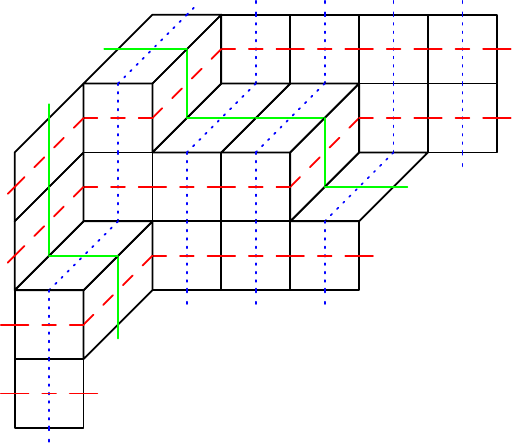}}
 \centering
  \caption{West-strips (red dashed), north-strips (blue dotted), and northwest-strips (green solid).}\label{strips}
 \end{minipage}
\end{figure}

\begin{defn}
A west-strip is a maximal connected set of tiles in which any two adjacent tiles share a vertical edge. Similarly, a north-strip is a maximal connected set of tiles in which any two adjacent tiles share a horizontal edge, and a northwest-strip is a maximal connected set of tiles in which any two adjacent tiles share a diagonal edge.
\end{defn}

These strips can be seen in Figure \ref{strips} by following the red dashed, blue dotted, and green solid lines.
In a tableau of type $X$ with $k$ \heavy's, $r$ \light's, and $\ell$ \hole's, the total number of west-strips is $k$, the total number of north-strips is $\ell$, and the total number of northwest-strips is $r$.

\begin{defn}
Let $\mathcal{T}$ be a tiling of $\Gamma(X)$. A \emph{rhombic alternative tableau} is a placement of $\alpha$'s, $\beta$'s, and $q$'s in the tiles of $\mathcal{T}$ such that the following three rules are satisfied:
\begin{enumerate}
\item[i.] Any tile left of and in the same west-strip as a $\beta$ is empty.
\item[i.] Any tile above and in the same north-strip as an $\alpha$ is empty.
\item[iii.] A tile that is not forced to be empty must contain an $\alpha$, $\beta$, or $q$.
\end{enumerate}
\end{defn}

We see an example of such a tableau in Figure \ref{RAT_example}. We denote by $R(\mathcal{T})$ the set of RAT associated to the tiling $\mathcal{T}$ of the diagram $\Gamma(X)$. When $r=0$, the definition above gives us precisely the alternative tableaux.

\begin{defn}\label{weight}
Let $T$ be a RAT of type $X$, where $X$ has $k$ \heavy's, $r$ \light's, and $\ell$ \hole's. Then the \emph{weight} of $T$ is defined as:
\[ \wt(T) = \alpha^k \beta^{\ell} \cdot (\mbox{ product of the symbols within } T) \]
\end{defn}

\begin{example}
For the tableau in Figure \ref{RAT_example}, we have $\wt(T) = \alpha^4\beta^4 \cdot \alpha^3 \beta^2 q^{15}$.
\end{example}

\begin{defn}
Let $T$ be a RAT of type $X$. We call the \emph{shape} of $T$ the shape of the rhombic diagram $\Gamma(X)$, and we denote it by $\shape(T)$.
\end{defn}

\begin{figure}[h!]
\centering
\includegraphics[height=1.5in]{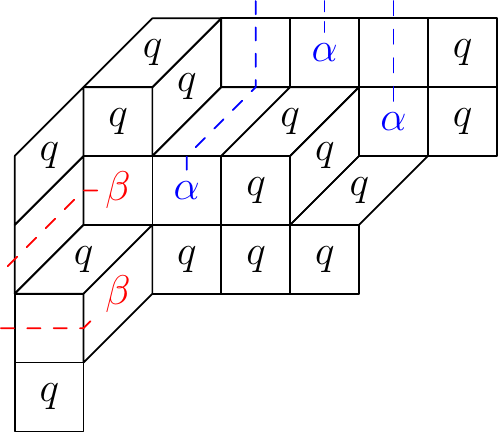}
\caption{A rhombic alternative tableau}\label{RAT_example}
\end{figure}

\begin{defn}
We define $\weight_{\mathcal{T}}(X)$ to be the weight generating function of all RAT with tiling $\mathcal{T}$, namely
\[ \weight_{\mathcal{T}}(X) = \sum_{T \in R({\mathcal{T})}} \wt(T). \]
\end{defn}

\begin{lem}[\cite{omxgv}]\label{flip_lem}
For two tilings $\mathcal{T}$ and $\mathcal{T}'$, 
\[\weight_{\mathcal{T}}(X) = \weight_{\mathcal{T}'}(X).\]
\end{lem}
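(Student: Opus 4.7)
The plan is to reduce the statement to a local invariance and then check the local cases by an explicit weight-preserving bijection on the fillings. First, I would invoke the classical fact (for rhombic/lozenge tilings of a simply connected region with unit tiles of three orientations) that any two tilings of $\Gamma(X)$ can be joined by a finite sequence of \emph{hexagon flips}: a flip replaces the three tiles meeting at an interior vertex of a unit hexagon with the three tiles of the opposite orientation. Thus it suffices to prove the lemma in the case that $\mathcal{T}$ and $\mathcal{T}'$ differ by a single such flip inside a hexagon $H \subset \Gamma(X)$.

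Next I would fix such an $H$ and exhibit a bijection $\phi: R(\mathcal{T}) \to R(\mathcal{T}')$ that preserves $\wt$. The key observation is that only the three tiles inside $H$ change, and the three strips meeting $H$ (one west-strip, one north-strip, one northwest-strip) are simply re-routed through $H$ in the opposite way. Therefore, outside $H$ the filling is unaffected by the flip, so $\phi$ will act as the identity on the tiles outside $H$ and will be determined by a local rule on the three tiles inside $H$. The local rule is governed by the status of the three external strip segments entering $H$: for each of the three strips, the presence or absence of an $\alpha$ above $H$ (in the north-strip) or a $\beta$ left of $H$ (in the west-strip), together with the tiles already in the strip on the other side, dictates which of the three cells inside $H$ are \emph{forced empty} and which must carry a symbol.

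With this setup, I would enumerate the possible local configurations around $H$. Each strip meeting $H$ has two external states, "already blocked by a symbol further out" versus "still live," which yields finitely many cases. In every case I would match each choice of $\{\alpha,\beta,q\}$ in the non-forced cells of $\mathcal{T}$-side of $H$ to a choice on the $\mathcal{T}'$-side of $H$, so that the multiset of symbols contributed by $H$ is the same on both sides, and so that the status of each outgoing strip segment (whether a symbol has been placed in it) is preserved, thereby keeping the fillings outside $H$ legal. Because $\wt(T)$ multiplies the symbols in the tiles together with the boundary factor $\alpha^k\beta^\ell$ (which depends only on $X$), preservation of the symbol multiset inside $H$ yields $\wt(\phi(T)) = \wt(T)$, so summing over $R(\mathcal{T})$ gives $\weight_\mathcal{T}(X) = \weight_{\mathcal{T}'}(X)$.

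The main obstacle is the local case analysis around $H$: one must verify that in each external-state configuration the number of admissible local fillings, and their weights, agree on the two sides of the flip. I expect this to boil down to checking a small table whose symmetries come from the observation that the three incoming strips play symmetric roles with respect to the "forcing" rules (a $\beta$ blocks its west-strip, an $\alpha$ blocks its north-strip, and the northwest-strip imposes no forcing), so that after accounting for which strips are already blocked from outside, the number of free cells in $H$ is independent of whether we use the $\mathcal{T}$- or $\mathcal{T}'$-tiling. Once that local identity is checked, summing the local bijections over all flips in a connecting sequence completes the proof.
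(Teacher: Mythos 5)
The first thing to note is that the paper does not prove this lemma at all: it is imported verbatim from \cite{omxgv}, so there is no in-paper argument to compare against. Your strategy --- reduce to a single hexagon flip via flip-connectivity of rhombic tilings of the simply connected region $\Gamma(X)$, then exhibit a weight-preserving bijection that is the identity outside the flipped hexagon $H$ and acts by a local rule inside --- is the natural one, and it is evidently the mechanism the authors have in mind (compare the Remark in Section \ref{sec_reverse}, where a flip is described as a Yang--Baxter-type local move on the associated forest, and the fusion-exchange rules themselves, which are manifestly local to a tile). The reduction step and the identification of the relevant interface data (which of the three strips entering $H$ is already ``dead,'' and which signals $H$ emits along its north- and west-strips) are correctly conceived.

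The gap is that essentially all of the mathematical content of the lemma sits in the local case analysis, and you assert it rather than perform it: you ``expect this to boil down to checking a small table'' but never verify that, for each fixed boundary condition and each fixed pair of outgoing signals, the two tilings of $H$ admit the same weight generating function of admissible local fillings. That identity is precisely what is being claimed, and it is not granted by symmetry: the three strips do \emph{not} play symmetric roles (only west- and north-strips carry forcing, the northwest-strip carries none), and the flip changes the order in which the three strips cross inside $H$, hence which of the two $H$-tiles of a given strip is able to force the other empty. Relatedly, you have the forcing direction reversed in your setup: a cell of $H$ in a north-strip is forced empty by an $\alpha$ \emph{below} $H$ in that strip (the rule empties tiles \emph{above} an $\alpha$), and a cell of $H$ in a west-strip is forced empty by a $\beta$ to its \emph{east} (the rule empties tiles \emph{left of} a $\beta$); an $\alpha$ above $H$ or a $\beta$ west of $H$ constrains nothing inside $H$, though it is constrained by what $H$ emits. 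Until the finite table is written down and checked with the correct orientations, what you have is a sound proof plan rather than a proof.
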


Lemma \ref{flip_lem} enables us to define $\weight(X) = \weight_{\mathcal{T}}(X)$ for an arbitrary $\mathcal{T}$.


\begin{defn} We denote by $\mathcal{Z}_{n,r}$ the weight generating function over RAT of size $(n,r)$, namely:
$\mathcal{Z}_{n,r} = \sum_{X \in B_n^r}\weight(X)$.
\end{defn}

The following main result for the RAT is obtained in \cite{omxgv}.

\begin{thm}[\cite{omxgv}]\label{main_result}
Let $X \in B_n^r$ be a word representing a state of the two-species ASEP of size $(n,r)$. The steady state probability of state $X$ is
\[\Prob(X) = \frac{1}{\mathcal{Z}_{n,r}} \weight(X).\]
\end{thm}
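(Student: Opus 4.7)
The proof will proceed via the \emph{Matrix Ansatz} for the two-species ASEP, which is the standard tool for such steady-state formulas. Derrida--Evans--Lebowitz--Speer and Uchiyama established that the stationary measure admits a matrix-product form
\[\Prob(X) = \frac{1}{\mathcal{Z}_{n,r}} \langle W | \prod_{i=1}^{n} M_{X_i} | V \rangle,\]
for matrices $D = M_{\heavy}$, $A = M_{\light}$, $E = M_{\hole}$ and boundary vectors $\langle W|, |V\rangle$ satisfying bulk relations of the form $DE - qED = D + E$, together with appropriate $q$-deformed commutation relations for $A$ with $D$ and $E$, and boundary conditions $\langle W|E = \tfrac{1}{\alpha}\langle W|$ and $D|V\rangle = \tfrac{1}{\beta}|V\rangle$. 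Thus it suffices to exhibit one representation of this algebra in which the matrix-product generating function equals $\weight(X)$.

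My plan is to build such a representation combinatorially using RAT. I would let the underlying vector space be spanned by ``partial'' rhombic diagrams, and define $D$, $A$, and $E$ as operators that extend a diagram by appending a new west-strip, northwest-strip, or north-strip respectively, summing with appropriate weights over all legal placements of $\alpha$'s, $\beta$'s, and $q$'s in the new strip. Reading $X$ and applying the operators in sequence, starting from $|V\rangle$ (the empty diagram) and pairing with $\langle W|$ at the end, would then produce precisely $\weight(X)$ as defined in Definition \ref{weight}, since each step inserts exactly the strip required to grow $\Gamma(X)$ by one boundary edge.

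The verification then reduces to checking the Matrix Ansatz relations as local identities on RAT. Lemma \ref{flip_lem} is the crucial technical lever here: since $\weight_{\mathcal{T}}(X)$ is invariant under flips of the tiling $\mathcal{T}$, for each relation I can choose a tiling in which both sides of the identity differ only in a small local region around the interface between two consecutive strips, and then exhibit an explicit weight-preserving bijection on that region. The boundary relations correspond to trivial diagram-extension statements that account for the global $\alpha^k\beta^{\ell}$ prefactor in the weight, so the bulk of the work is in the bulk relations.

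The principal obstacle will be the \emph{multispecies} bulk relations, in which heavy, light, and hole strips meet and interact. Unlike the single-species alternative tableaux, where only west- and north-strips must be reconciled, the diagonal northwest-strips force one to verify $q$-deformed commutations for all three species pairs, as well as consistency when three strips cross at a common rhombic vertex. I expect this to require a careful case analysis on the local tile configuration around a swap, with an inductive argument propagating the weight-preserving bijection along each strip; flips provided by Lemma \ref{flip_lem} should allow me to reduce every such configuration to a canonical form. Once these local identities are in hand, the theorem follows immediately from the Matrix Ansatz.
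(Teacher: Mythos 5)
First, a point of reference: the paper does not prove Theorem \ref{main_result} here at all --- it is imported verbatim from \cite{omxgv} --- so the comparison must be with the argument in that reference. Your strategy, a Matrix Ansatz in the style of \cite{derrida} and \cite{uchiyama} verified by a combinatorial construction on the tableaux, is indeed the route taken there, so you have identified the intended proof. (A small attribution slip: the single-species ansatz is due to Derrida--Evans--Hakim--Pasquier; Arita--Ayyer--Leibowitz--Speer \cite{ayyer} is the two-species semi-permeable-boundaries paper.) The reduction in your last sentence is also sound: once the ansatz relations are verified for the RAT weights, the theorem follows.

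The gap is that, as written, the proposal is a programme rather than a proof: every step carrying actual mathematical content is deferred. Verifying the bulk relations $DE-qED=D+E$, $DA-qAD=A$, $AE-qEA=A$ and the boundary relations as weight-preserving identities on RAT \emph{is} the theorem; announcing that you ``expect this to require a careful case analysis'' does not discharge it. Two concrete obstacles should be named. (1) Your operators $D,A,E$ append a strip ``summing over all legal placements'' in that strip, but legality is not a local condition: rule (i) of the RAT definition constrains an entire west-strip relative to a $\beta$ that may only be placed after many further letters of $X$ have been read, so a space of ``partial diagrams'' must record which strips are still open, and the relations cannot be expected to hold as honest operator identities. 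This is precisely why \cite{omxgv}, following \cite{cw2011}, uses the weaker bracketed form of the ansatz --- the relations are required to hold only inside $\langle W|u(\cdot)v|V\rangle$ for arbitrary flanking words $u,v$, with an overall scaling constant $\lambda$ --- and proves them by explicit bijections on \emph{complete} tableaux, with Lemma \ref{flip_lem} used exactly as you suggest to normalize the tiling near the affected bond. (2) The ansatz theorem itself must be stated for the particle-conserving two-species chain: the stationary vector is determined only sector by sector in the number of light particles, so one must check that the bracketed quantities do not vanish identically on $B_n^r$ before concluding proportionality there. Neither point is fatal, but until the local identities are actually exhibited, the claim ``it suffices to exhibit one representation'' has reduced the theorem to something no easier than the theorem.
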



\begin{defn}
An \emph{assembl\'{e}e} of size $(n,r)$ is a collection of $r$ nonempty ordered sets, or blocks, consisting of elements from $\{1,\ldots,n\}$, where the sets are all disjoint, and their union is $\{1,\ldots,n\}$. We call the last element in each block the \emph{block-end}, and we give a canonical ordering to the blocks such that the block-ends are decreasing from left to right. Define $\mathcal{A}_{n,r}$ to be the set of assembl\'{e}es of size $(n,r)$. 
\end{defn}

\begin{example}
$A= [2,10,12,7]\ [5,9,1,8,6]\ [3,11,4]$ is an example of an assembl\'{e}e of size $(12,3)$ in canonical order, and the block-ends are $[7,6,4]$.
\end{example}

From this point, an assembl\'{e}e is assumed to be in the canonical order.

\begin{defn}
Let $A$ be an assembl\'{e}e of size $(n,r)$. Define the block-end sequence $\mathbf{b} = [b_1,\ldots,b_r]$ to be the decreasing sequence of block-ends of $A$. Define the \emph{left-right sequence} to be the sequence of left to right maximal elements greater than $b_1$, and we call it $\lrs(A)$. Similarly, define the \emph{right-left sequence} to be the sequence of right to left maximal elements smaller than $b_r$, and we call it $\rls(A)$.
\end{defn}

\begin{example}
For $A = [2,10,12,7]\ [5,9,1,8,6]\ [3,11,4]$, we have $\lrs(A) = [12,11]$ and $\rls(A) = [3,2]$.
\end{example}


\begin{defn}
Suppose $x \in A$ is not a block-end. If $x+1$ is to the right of $x$ in $A$, then $x$ is an \emph{increase}. Otherwise, $x$ is a \emph{decrease}. By convention, $n+1$ is a decrease if it is not a block-end.
\end{defn}

\begin{example}
For $A=[2,10,12,7]\ [5,9,1,8,6]\ [3,11,4]$, the set of increases is $\{2,10,5,3\}$, and the set of decreases is $\{12,9,1,8,11\}$.
\end{example}

\begin{defn} 
Let $A$ be an assembl\'{e}e (in the canonical order) of size $(n+1,r+1)$. We define $X(A)$ to be a state in $B_{n}^{r}$ of the two-species ASEP. $X(A)$ is constructed by replacing each decrease with a \hole, each increase with a \heavy, and each block-end with a \light, with the last block-end omitted. We also define the \emph{shape} of $A$ to be $\shape(A)=\shape(X(A))$, which is also the shape of rhombic diagram $\Gamma(X(A))$.
\end{defn}

\begin{example}
For $A= [2,10,12,7]\ [5,9,1,8,6]\ [3,11,4]$, we have $X(A) = \heavy\ \heavy\ \hole\ \light\ \heavy\ \hole\ \hole\ \hole\ \light\ \heavy\ \hole$. 
\end{example}


\section{Bijection from rhombic alternative tableaux to assembl\'{e}es}\label{sec_bij}


The RAT are enumerated by the Lah numbers, which are indexed by $(n,r)$ and defined as ${n \choose r} \frac{(n+1)!}{(r+1)!}$. The Lah numbers also enumerate the assembl\'{e}es. In this section, we describe an RSK-style weight preserving bijection between assembl\'{e}es and the RAT, that generalizes the fusion-exchange algorithm of \cite{slides} for the alternative tableaux. 

\subsection{Assembl\'{e}es to RAT with the fusion-exchange algorithm}\label{sec_forward}

The goal of this section is to describe a bijection between $\mathcal{A}_{n+1,r+1}$ and RAT of size $(n,r)$. Our bijection is weight-preserving: an assembl\'{e}e $A$ of size $(n+1,r+1)$ with $|\lrs| = i$ and $|\rls|=j$ is mapped to some $T(A)$, which is a RAT such that $\wt(T(A))=\alpha^{n-r-i} \beta^{n-r-j}$ at $q=1$.

Let $A \in \mathcal{A}_{n+1,r+1}$. In Definition \ref{F_E} we describe the \emph{fusion-exchange algorithm} to build a rhombic alternative tableau $T(A)$ of type $X(A)$ and size $(n,r)$.  In this algorithm, we start with an arbitrary tiling on the rhombic diagram $\Gamma(X(A))$, and then send \emph{labels} through the edges of the tiles from the southeast- to the northwest boundary of $\Gamma(X(A))$ along the trajectories shown in Figure \ref{fig_init}. In each tile, from southeast to northwest, the two crossing labels will either \emph{exchange} or \emph{fuse}, which will determine the filling of that tile with precise rules as given in Definition \ref{F_E}.

%
%
%

\begin{figure}[h]
\centering
\includegraphics[height=2in]{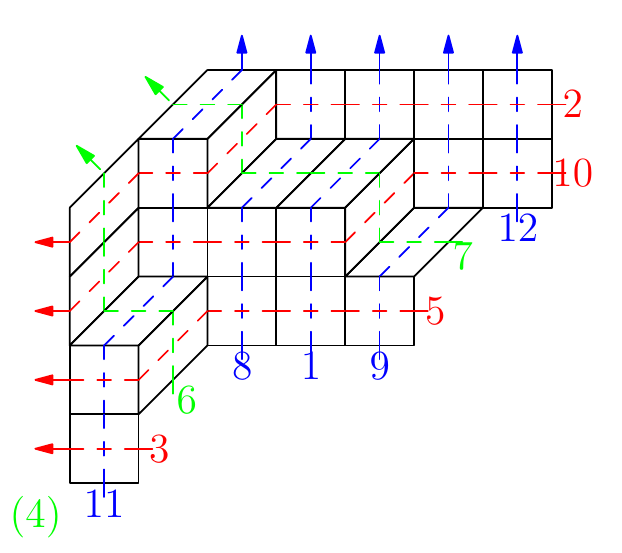}
\caption{The diagram $\Gamma(X(A))$ with an arbitrary tiling for $A = [2,10,12,7]\ [5,9,1,8,6]\ [3,11,4]$. The dashed lines show the trajectories followed by the labels on the southwest boundary edges as they travel towards the northwest boundary edges.}\label{fig_init}
\noindent
\end{figure}

\begin{defn}
A \emph{label} is a (possibly empty) set of consecutive integers. For two disjoint nonempty labels $A$ and $B$, we say $B \succ A$ if for the smallest $i \in B$ and the largest $j \in A$, $i=j+1$.
\end{defn}

Note that if $B \succ A$, then $A \cup B$ is also a label. We denote the empty label by $\emptyset$. By convention, $A \nsucc \emptyset$ for any label $A$ (including when $A=\emptyset$).

To simplify notation and minimize the number of cases to describe, we define the \emph{east edge} of a tile to be its east-most edge, i.e. the right-most vertical edge of a square or tall tile, or the right-most diagonal edge of a short tile. Similarly, the \emph{south edge} is its south-most edge, i.e. the bottom-most horizontal edge of a square or short tile, or the bottom-most diagonal edge of a tall tile. The \emph{west-} and \emph{north edges} are defined accordingly.

\begin{defn}\label{F_E} \textbf{(Fusion-Exchange Algorithm.)}

\textbf{Initiation.} We set the label of every southeast edge of $\Gamma(X(A))$ to be the corresponding element of $A$, omitting the last block-end. For example, see Figure \ref{fig_init}.

\begin{figure}[!ht]
  \centerline{\includegraphics[height=3in]{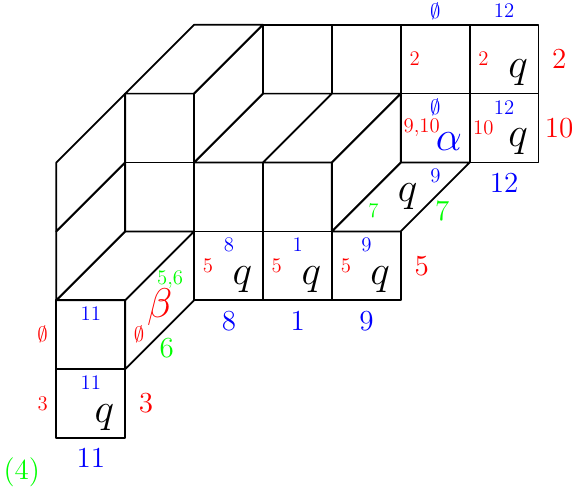}}
 \centering
  \caption{The first 11 steps of the fusion-exchange algorithm, as initiated in Figure \ref{fig_init}}\label{fig_steps}
\end{figure}

\textbf{Step.} Let a tile have label $A$ on its east edge and label $B$ on its south edge, with west and north edges unlabeled. Then a step of the algorithm involves labeling the west and north edge, and possibly placing $\alpha$, $\beta$, or $q$ in the tile according to the following cases.
\begin{enumerate}
\item[R(I)] $A \succ B$ and $B$ is labeling a horizontal edge. Then the west edge receives the label $B \cup A$, the north edge receives the label $\emptyset$, and an $\alpha$ is placed in the tile.
\item[R(II)] $B \succ A$ and $A$ is labeling a vertical edge. Then the north edge receives the label $A \cup B$, the west edge receives the label $\emptyset$, and a $\beta$ is placed in the tile.
\item[R(III)] All other cases, i.e. either $A \nsucc B$ and $B \nsucc A$, or $A \succ B$ and $B$ is labeling a diagonal edge, or $B \succ A$ and $A$ is labeling a diagonal edge. Then the labels simply pass through each other, and the west edge receives the label $A$ and the north edge receives the label $B$. A $q$ is placed in the tile if neither $A$ or $B$ equal $\emptyset$.
\end{enumerate}

Figure \ref{rules} illustrates the above rules applied to each tile and Figure \ref{fig_steps} shows some first steps of the algorithm.

\textbf{Termination.} The algorithm is complete once every edge of $\Gamma(X(A))$ has received a label. Figure \ref{fig_fusion_full} shows a complete example. After erasing the labels on the edges, we obtain a rhombic alternative tableau $T(A)$ of type $X(A)$ and size $(n,r)$. The following lemma shows $T(A)$ is a valid RAT.
\end{defn}

\begin{figure}[h]
\centering
\includegraphics[width=\linewidth]{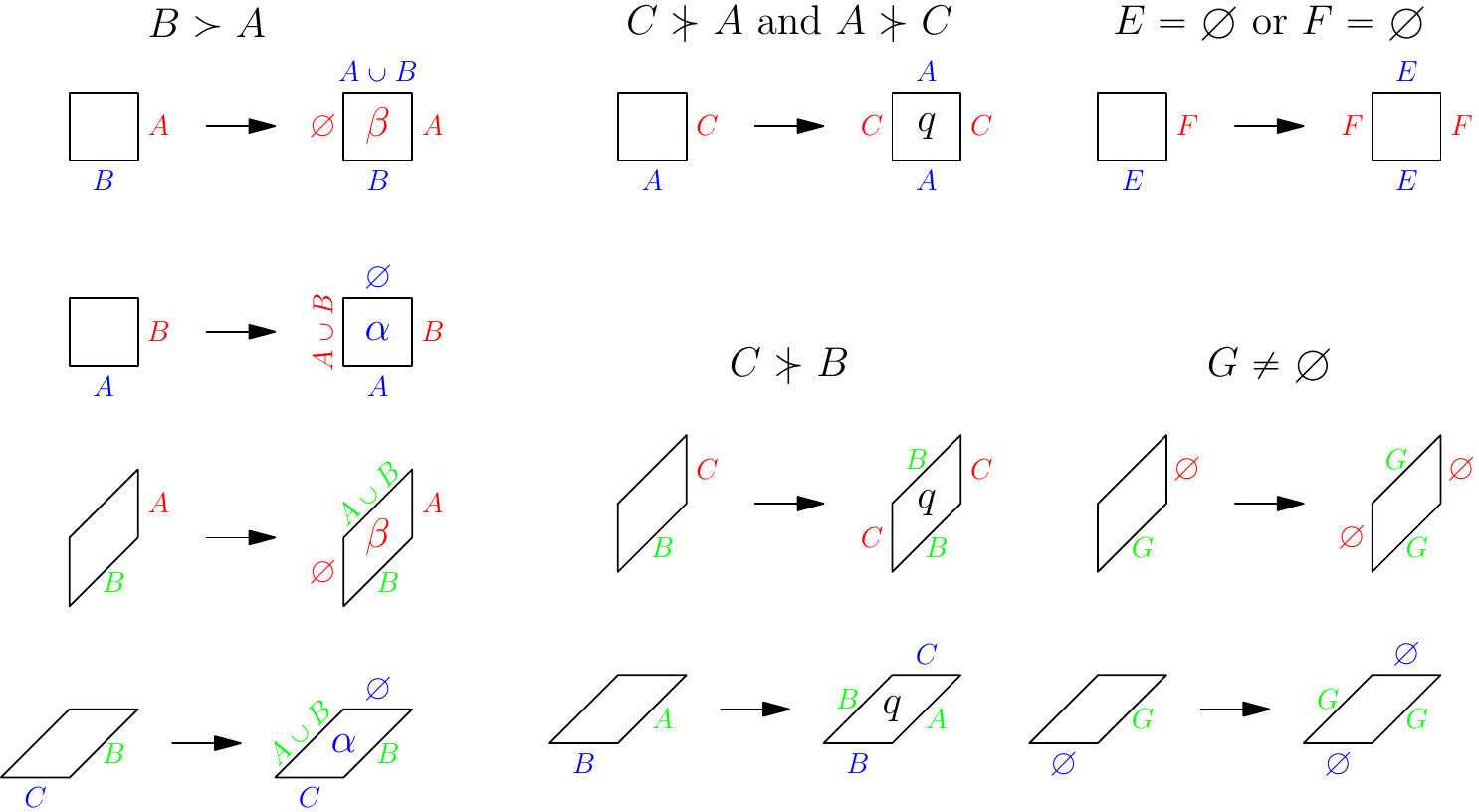}
\caption{The fusion-exchange rules for labels $A, B, C, E, F, G$ applied to all possible tiles.}
\noindent
\label{rules}
\end{figure}

\begin{lem}\label{forward_lemma} 
Let $A\in \mathcal{A}_{n+1,r+1}$ be an assembl\'{e}e, and let $\Gamma(X(A))$ be a rhombic diagram with arbitrary tiling $\mathcal{T}$. Then the fusion-exchange algorithm results in a tableau $T(A)$, which is a filling with $\alpha$'s and $\beta$'s of $\Gamma(X(A))$ such that:
\begin{enumerate}
\item[(i.)] A tile in the same north-strip and above an $\alpha$ is empty.
\item[(ii.)] A tile in the same west-strip and left of a $\beta$ is empty.
\item[(iii.)] A tile that is not forced to be empty must contain an $\alpha$, $\beta$, or a $q$.
\end{enumerate}
\end{lem}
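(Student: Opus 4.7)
The plan is to prove the three conditions (i), (ii), (iii) after establishing a few preliminaries about label propagation. First I would observe that every label produced by the algorithm is a (possibly empty) set of consecutive integers, since the initial labels are singletons and fusion $A \cup B$ is taken only when $A \succ B$ or $B \succ A$; moreover, by initiation each SE-boundary edge carries a nonempty singleton. I would also record which tiles can host which symbols: R(I) requires the south label on a horizontal edge, so $\alpha$ appears only in square or short tiles, while R(II) requires the east label on a vertical edge, so $\beta$ appears only in square or tall tiles.

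A key auxiliary step is to show that every tall tile has nonempty south label, and dually every short tile has nonempty east label. The diagonal south of a tall tile is either on the SE boundary (nonempty block-end label) or shared with another tile's diagonal north edge; since only tall tiles have diagonal north edges, the chain of tall tiles obtained by repeatedly descending via south edges terminates on the SE boundary. Induction up the chain then gives nonempty south everywhere: in each tall tile R(I) cannot apply, R(II) gives north $= A \cup B \neq \emptyset$, and R(III) gives north $= B$, which is nonempty by the inductive hypothesis. The short-tile claim is symmetric, chaining east through short tiles to the SE boundary.

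For (i), suppose $T$ contains $\alpha$. By R(I), $T$'s north edge has label $\emptyset$. The tile $T'$ directly above $T$ in the same north-strip is square or short, with south label $\emptyset$. By the conventions $A \nsucc \emptyset$ and $\emptyset \nsucc A$, neither R(I) nor R(II) can apply at $T'$; only R(III) applies, placing no symbol (since south is empty) and propagating $\emptyset$ to $T'$'s north. Iterating up the north-strip yields (i), and (ii) follows by the symmetric argument along west-strips after a $\beta$.

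For (iii), an unmarked tile $T$ must have had R(III) applied with at least one of its south or east labels equal to $\emptyset$. By the auxiliary claim, a short unmarked tile forces south $= \emptyset$, a tall unmarked tile forces east $= \emptyset$, and a square unmarked tile forces at least one. In the south $= \emptyset$ case, I would trace down the north-strip: at each tile, $\emptyset$ on its north comes from either R(I) (placing $\alpha$ and terminating the trace) or R(III) with south $= \emptyset$ (continuing the trace). Since horizontal SE-boundary edges carry nonempty labels, the trace must terminate at an $\alpha$, placing $T$ above that $\alpha$ in the same north-strip. Symmetrically east $= \emptyset$ yields a $\beta$ to the right in $T$'s west-strip, establishing (iii). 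The main technical obstacle is the careful case analysis of tile types and edge orientations, in particular verifying that the $\emptyset$-propagations stay within the correct strip and cannot reach a nonempty SE-boundary label.
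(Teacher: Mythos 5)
Your proposal is correct in substance and follows the same route as the paper's own (much terser) proof, which rests on exactly your two observations: under R(III) the label $\emptyset$ propagates north along north-strips and west along west-strips, and a symbol is placed in a tile if and only if both its south and east labels are nonempty; (i) and (ii) then follow from R(I) and R(II), and (iii) from the contrapositive. Your treatment of (iii) is in fact more complete than the paper's. The paper never explains why a diagonal input edge (the south edge of a tall tile or the east edge of a short tile) cannot carry $\emptyset$, which is exactly what is needed to guarantee that the $\emptyset$-input of an unmarked tile can be traced back along a north- or west-strip to an $\alpha$ or a $\beta$ rather than originating on a diagonal; the paper only records this fact later, in Lemma \ref{blockend_lemma} (a diagonal label is never absorbed). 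Your auxiliary step supplies precisely that missing ingredient.

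One small flaw in that auxiliary step: the tile on the southeast side of a tall tile's south edge need not be another tall tile. The shared diagonal is an output edge of the preceding tile in the northwest-strip, hence its north edge if that tile is tall but its \emph{west} edge if that tile is short, so the ``chain of tall tiles obtained by repeatedly descending via south edges'' is really the northwest-strip, which mixes tall and short tiles. Consequently your two ``dual'' claims cannot be established by two independent inductions; they must be proved simultaneously by a single induction along the northwest-strip starting from its (nonempty, block-end-labeled) southeast boundary edge. Since your inductive step already verifies both tile types correctly --- R(I) cannot apply to a tall tile and R(II) cannot apply to a short tile, the applicable fusion rule puts a nonempty union $A\cup B$ on the diagonal output, and R(III) passes the diagonal label through unchanged --- the repair is immediate and nothing else in your argument is affected.
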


Lemma \ref{forward_lemma} implies that $T(A)$ is a RAT of size $(n,r)$. Furthermore, we have the following theorem about the weight of $T(A)$.

\begin{figure}[h]
\centering
\includegraphics[width=0.8\linewidth]{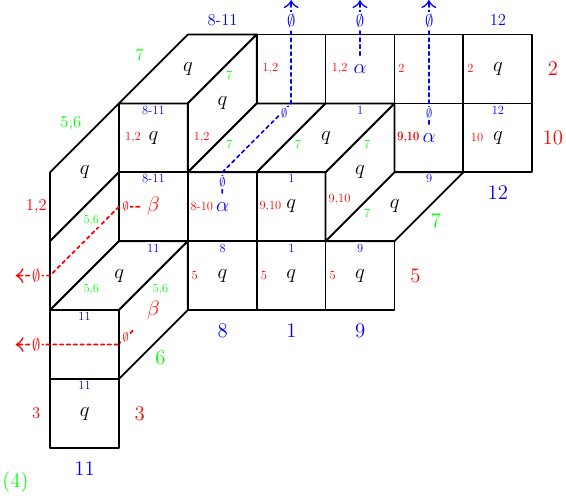}
\caption{The complete fusion-exchange algorithm to generate $T(A)$ for $A = [2,10,12,7]\ [5,9,1,8,6]\ [3,11,4]$. After erasing the labels on the edges, we obtain the tableau of Figure \ref{RAT_example}.}
\noindent
\label{fig_fusion_full}
\end{figure}

\begin{thm}\label{forward_thm}
Let $A\in \mathcal{A}_{n+1,r+1}$ be an assembl\'{e}e with $|\lrs(A)|=i$ and $|\rls(A)|=j$, and let $T(A)$ be the RAT obtained by applying the fusion-exchange algorithm to $A$. Then $\wt(T(A)) = \alpha^{n-r-i}\beta^{n-r-j}$ at $q=1$.
\end{thm}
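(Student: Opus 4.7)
The plan is to reduce the theorem to a count of $\alpha$'s and $\beta$'s in $T(A)$ and then analyze the structure of the fusion-exchange algorithm. By Definition \ref{weight}, $\wt(T(A)) = \alpha^k\beta^\ell \cdot(\text{product of the symbols in }T(A))$, where $k$ equals the number of increases of $A$ (the heavys of $X(A)$) and $\ell$ equals the number of decreases of $A$ (the holes of $X(A)$). Since $k+\ell = n-r$, setting $q=1$ reduces the theorem to proving
\[
\#\alpha = \ell - i \qquad \text{and} \qquad \#\beta = k - j.
\]

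Next I would establish a structural lemma: each north-strip of $T(A)$ contains at most one $\alpha$, and each west-strip contains at most one $\beta$. Rule R(I) places an $\alpha$ and sets the north edge of its tile to $\emptyset$, while R(II) places a $\beta$ and sets the west edge to $\emptyset$. A short case-check on R(I)-R(III), using the convention that $\succ$ is only defined for two non-empty labels (so $A \nsucc \emptyset$ always holds), shows that once any edge of a strip carries an empty label every subsequent edge in the same strip also carries an empty label, and no further $\alpha$ or $\beta$ can appear in that strip. Since the northwest boundary of $\Gamma(X(A))$ contains exactly $\ell$ west edges (the terminal edges of the $\ell$ north-strips) and exactly $k$ south edges (the terminal edges of the $k$ west-strips), this yields
\[
\#\alpha = \ell - N_W \qquad \text{and} \qquad \#\beta = k - N_S,
\]
where $N_W$ (resp.\ $N_S$) is the number of non-empty labels on the west (resp.\ south) edges of the northwest boundary after termination.

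The crux of the proof is then showing $N_W = i$ and $N_S = j$. By Lemma \ref{flip_lem} I may fix any convenient tiling. The first observation is that labels propagating through the diagram are always intervals of consecutive integers, and two labels $U,V$ can fuse only when $U \succ V$ or $V \succ U$; consequently the non-empty final labels on the northwest boundary form an interval partition of $\{1,\ldots,n+1\}\setminus\{b_r\}$. I would then trace the trajectory of each initial singleton $\{x\}$ placed on the southeast boundary and argue that for each pair of consecutive integers $x, x+1$, either they eventually meet in a tile and fuse (contributing an $\alpha$ or $\beta$), or one of them escapes to the northwest boundary without being absorbed. A case analysis on the relative positions of $\{x\}$ and $\{x+1\}$ in the southeast boundary, together with the definitions of increases, decreases, and block-ends, should then identify the elements surviving on west edges with $\lrs(A)$, those surviving on south edges with $\rls(A)$, and confirm that the $r$ southwest edges all carry non-empty labels determined by the block-ends $b_1,\ldots,b_r$.

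The main obstacle is the last step, namely matching the surviving labels with the specific elements of $\lrs(A)$ and $\rls(A)$; tracing every pair $(\{x\},\{x+1\})$ directly is delicate because their paths depend on intermediate fusions. I expect the cleanest route is an induction on $n+1-r$, inserting one element at a time into $A$ and checking that each insertion either (i) extends an existing strip and produces exactly one new $\alpha$ or $\beta$ with no change to $\lrs$ or $\rls$, or (ii) starts a new strip whose label escapes to the boundary, incrementing $|\lrs|$ or $|\rls|$ by one without adding an $\alpha$ or $\beta$. Either way, the identities $\#\alpha+i=\ell$ and $\#\beta+j=k$ are preserved, and the theorem follows by induction, with the base case $n=r$ being trivial.
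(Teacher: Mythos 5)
Your reduction is sound and matches the paper's: at $q=1$ the weight is $\alpha^{k}\beta^{\ell}\alpha^{\#\alpha}\beta^{\#\beta}$ with $k+\ell=n-r$, each north-strip carries at most one $\alpha$ and each west-strip at most one $\beta$ (an $\emptyset$ label propagates and only R(I), resp.\ R(II), creates an $\emptyset$ on a north-strip, resp.\ west-strip), so everything comes down to showing that the number of north-strips with non-empty terminal label equals $|\lrs(A)|$ and the number of west-strips with non-empty terminal label equals $|\rls(A)|$. But that identification is the entire content of the theorem, and at exactly this point your argument stops being a proof: ``a case analysis \ldots should then identify the elements surviving on west edges with $\lrs(A)$'' and ``I expect the cleanest route is an induction'' are plans, not arguments. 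The missing idea is the invariant that the paper isolates as Lemma \ref{max_lemma}: if an edge carries a non-empty label $L$, that edge lies in the $L_{\max}$-strip; equivalently, the maximum of a label is never absorbed and rides its own strip from initiation to termination. With this invariant, the delicacy you correctly worry about (``their paths depend on intermediate fusions'') disappears: to decide whether the $x$-strip survives with $x$ as a terminal maximum, one only needs to know whether the trajectory of $x+1$ (which stays in the $(x+1)$-strip, or in whatever strip absorbs it while preserving its own maximum) must cross the $x$-strip, and this is read off from the relative position of $x$ and $x+1$ in $A$ together with the edge types. That is how the paper proves Lemmas \ref{label_order}, \ref{label_order_lem} and \ref{lrs_rls}, which then give $N_W=|\lrs(A)|$ and $N_S=|\rls(A)|$ directly.

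Your fallback induction on $n+1-r$ does not repair the gap as stated. Inserting an element into $A$ changes the values (hence increase/decrease status) of other elements, changes $X(A)$ and therefore the diagram, its tiling, and the whole system of trajectories; there is no evident relation between the fusion-exchange run on the smaller diagram and on the larger one. Moreover the dichotomy you propose for an insertion (``extends an existing strip'' versus ``starts a new strip'') is not the right one: every inserted non-block-end creates a new west- or north-strip, and the real question is whether that strip's label is eventually absorbed, which again depends on the global positions of consecutive values --- precisely what the maximum-invariant controls. Minor points: the surviving labels partition $\{1,\ldots,n+1\}\setminus\{b_{r+1}\}$, not $\{1,\ldots,n+1\}\setminus\{b_r\}$; and you should also verify that a label cannot become $\emptyset$ on a northwest-strip (the paper's Lemma \ref{blockend_lemma}), so that the $r$ diagonal terminal edges never contribute to the $\alpha$/$\beta$ count.
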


\begin{proof}[Proof of Lemma \ref{forward_lemma}]
We observe the following, which is a direct consequence of R(I)-R(III) of Definition \ref{F_E}: 
\begin{itemize}
\item By R(III), if $\emptyset$ is labeling the south edge of a tile, after a step of the algorithm the labels pass through each other at that tile, and so the north edge of that tile also acquires the label $\emptyset$. Similarly, if $\emptyset$ is labeling the east edge of a tile, then after a step of the algorithm, the west edge of that tile also acquires the label $\emptyset$.
\item By R(I)-R(III), an $\alpha$, $\beta$, or $q$ can be placed in a tile if and only if both the south and east edges have labels that are not $\emptyset$.
\end{itemize}

By R(I), as soon as an $\alpha$ is placed in a tile $\tau_1$, its north edge (which is necessarily a horizontal edge) acquires the label $\emptyset$. Thus every horizontal edge above $\tau_1$ in the north-strip containing $\tau_1$ is labeled with $\emptyset$. Consequently, the algorithm must leave empty every tile in the same north-strip and above a tile containing an $\alpha$.
 
Similarly, by R(II), as soon as a $\beta$ is placed in a tile $\tau_2$, its west edge (which is necessarily a vertical edge) acquires the label $\emptyset$. Thus every vertical edge left of $\tau_2$ in the west-strip containing $\tau_2$ is labeled with $\emptyset$. Consequently, the algorithm must leave empty every tile in the same west-strip and left of a tile containing a $\beta$.

Finally, if neither the south or east edge of a tile is labeled by $\emptyset$, the tile must get an $\alpha$, $\beta$, or $q$.
\end{proof}

\begin{defn}
We fix our notation from this point onwards.
\begin{itemize}
\item A \emph{label at initiation} is an element of $A$ that is labeling an edge on the southeast boundary, at the initiation of the algorithm. 
\item If $x$ is the label of an edge at initiation, we can also call its corresponding strip \emph{the $x$-strip}. 
\item A \emph{label at termination} is the label $J$ that is labeling an edge on the northwest boundary corresponding to that strip at the termination of the algorithm. 
\item We say a label $A$ can \emph{absorb} a label $B$ if $A \succ B$. 
\item For a label $L$, we set $L_{\max}=\max\{x \in L\}$.
\end{itemize}
\end{defn}

\begin{example}
In Figure \ref{fig_fusion_full}, 2 is the label at initiation of the top-most west-strip (in other words, the 2-strip), and 7 is the label at initiation of the top-most northwest strip (in other words, the 7-strip). The 11-strip has label $(8,9,10,11)$ at termination, and the 5-strip has label $\emptyset$ at termination.
\end{example}

Note that for an assembl\'{e}e $A$ if $x$ is to the right of $y$ at initiation of $T(A)$, it is the same as saying $x$ is to the \emph{left} of $y$ in $A$.


To prove Theorem \ref{forward_thm}, we use the lemmas that follow. In particular, the following lemma is central to all of our proofs.

\begin{lem}\label{max_lemma}
If some edge $e$ of tableau $T$ has label $L \neq \emptyset$, and $x=L_{\max}$, then $e$ is in the $x$-strip.
\end{lem}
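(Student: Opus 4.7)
The plan is to induct on the number of steps of the fusion-exchange algorithm that have been carried out so far, maintaining as the invariant precisely the statement of the lemma: at every stage, if an edge $e$ carries a nonempty label $L$, then $e$ lies in the $L_{\max}$-strip. The base case is immediate from the initiation step, since each southeast boundary edge $e$ is labeled by a single element $x$; so $L_{\max}=x$, and $e$ is by definition an edge of the $x$-strip.

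The only geometric input needed for the induction is the following tile observation: in each of the three rhombic tile types, the east and west edges share a common orientation (vertical in the square and the tall rhombus, diagonal in the short rhombus) and hence lie in the same strip; likewise the south and north edges of each tile share an orientation (horizontal in the square and the short rhombus, diagonal in the tall rhombus) and therefore also lie in the same strip. In other words, within any single tile, the strip of the east edge coincides with the strip of the west edge, and the strip of the south edge coincides with that of the north edge. This is a direct inspection of Figure \ref{tiles}, and it is really the only subtle ingredient of the argument; once it is granted, the rest of the proof is mechanical.

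For the inductive step, consider a tile whose south edge carries label $B$ and whose east edge carries label $A$, and apply the induction hypothesis: if $A\neq\emptyset$, the east edge lies in the $A_{\max}$-strip, and if $B\neq\emptyset$, the south edge lies in the $B_{\max}$-strip. In case R(I), $A\succ B$ implies $(A\cup B)_{\max}=A_{\max}$; the west edge receives the label $A\cup B$ and, by the tile observation, lies in the same strip as the east edge, which by induction is the $A_{\max}$-strip, while the north edge receives $\emptyset$ and the invariant holds vacuously there. Case R(II) is symmetric: $(A\cup B)_{\max}=B_{\max}$, and the north edge, lying in the same strip as the south edge, is in the $B_{\max}$-strip. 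In case R(III) the labels pass through unchanged, so the west edge inherits the label $A$ and, being in the strip of the east edge, lies in the $A_{\max}$-strip; similarly the north edge inherits $B$ and lies in the $B_{\max}$-strip. In every case the maximum of the new label on each outgoing edge equals the maximum of the incoming label travelling in that continuing strip, so the invariant is preserved and the induction closes.
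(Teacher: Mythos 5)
Your proof is correct and follows essentially the same route as the paper: an induction on the steps of the fusion-exchange algorithm, checking that under R(I)--R(III) a fused label's maximum agrees with the maximum of the label already travelling in that strip, while exchanged labels stay in their own strips. You make explicit the geometric observation (east/west and north/south edges of a tile share a strip) that the paper leaves implicit, which is a welcome addition but not a different argument.
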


\begin{proof}
We provide a simple inductive proof on the steps of the fusion-exchange algorithm. This is trivially true at initiation. Now we observe that the rules of Definition \ref{F_E} imply the following:
\begin{enumerate}
\item If two labels exchange, they continue labeling edges in their respective strips in the next step of the algorithm, so the claim continues to be true.
\item Suppose that at a step of the algorithm, two labels $J$ and $L$ fuse in some tile $t$, with $J \succ L$. Then $J\cup L$ becomes the label of the opposite edge in the $J_{\max}$-strip, while the opposite edge of the $L_{\max}$-strip acquires the label $\emptyset$. Since $J_{\max}=(J \cup L)_{\max}$, the claim continues to be true after this step.
\end{enumerate}
Since this holds for every step of the algorithm, the lemma follows.
\end{proof}

The following lemmas establish an order on the labels at termination.

\begin{lem}\label{label_order}
Let $\{b_1,\ldots,b_{r+1}\}$ be the set of block ends in decreasing order of an assembl\'{e}e of size $(n+1,r+1)$. Then the union of labels of the horizontal edges on the northwest boundary is $\{b_1+1,\ldots,n+1\}$, the union of labels of the vertical edges is $\{1,\ldots,b_{r+1}-1\}$, and the union of the labels of the diagonal edges is $\{b_{r+1}+1,\ldots,b_1\}$.
\end{lem}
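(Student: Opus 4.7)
My plan rests on Lemma~\ref{max_lemma} together with a case analysis of the rules R(I)--R(III) in Definition~\ref{F_E}. By direct inspection of those rules, the only label that ever receives $\emptyset$ from a previously non-empty strip is one sitting on a horizontal edge (emptied by R(I) on the north edge) or on a vertical edge (emptied by R(II) on the west edge); a label on a diagonal edge is never absorbed. Consequently each non-last block-end $b_i$ ($i=1,\ldots,r$) survives as the maximum of its own non-empty terminal label, and by Lemma~\ref{max_lemma} that label lies on a diagonal edge of the NW boundary. The rules also preserve the invariants that each label is a run of consecutive integers and that distinct labels are pairwise disjoint, so the terminal labels partition $\{1,\ldots,n+1\}\setminus\{b_{r+1}\}$ into consecutive runs.

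First I would argue that no such run can cross a barrier. Since each $b_i$ is the max of its own run, no other run contains $b_i$, and by consecutiveness no run contains both $b_i$ and $b_i+1$; likewise no run straddles the gap at $b_{r+1}$. Hence the partition refines
\[
\{1,\ldots,b_{r+1}-1\}\ \sqcup\ (b_{r+1},b_r]\ \sqcup\ (b_r,b_{r-1}]\ \sqcup\ \cdots\ \sqcup\ (b_2,b_1]\ \sqcup\ (b_1,n+1],
\]
and the three unions claimed in the lemma correspond to grouping these blocks by their destination edge type: the slabs $(b_{i+1},b_i]$ for $i=1,\ldots,r$ go to the diagonal edges, $(b_1,n+1]$ goes to the horizontal edges, and $\{1,\ldots,b_{r+1}-1\}$ goes to the vertical edges.

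Next I would show that each intermediate slab $(b_{i+1},b_i]$ is entirely absorbed into the $b_i$-strip, so that the $b_i$-chain's terminal label is precisely $(b_{i+1},b_i]$. This is the main technical step. The key observation is that a northwest-strip can absorb labels from both north-strips (R(I) at a short rhombus) and west-strips (R(II) at a tall rhombus), so the only obstruction that stops the $b_i$-chain from growing downward is another northwest-strip, namely the next block-end $b_{i+1}$. I would prove by downward induction on the chain's minimum $m$ that so long as $m>b_{i+1}+1$, the element $m-1$ is a non-block-end and eventually gets absorbed into the $b_i$-chain: either directly at the unique tall or short rhombus where the $b_i$-strip crosses the $(m-1)$-strip, or indirectly via an intermediate chain (containing $m-1$) that is itself absorbed into $b_i$'s chain later. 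The induction terminates at $m=b_{i+1}+1$, since $b_{i+1}$'s northwest-strip is immune to absorption.

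For the extremal slabs I would argue analogously. In $(b_1,n+1]$ the value $n+1$, when strictly greater than $b_1$, is a decrease by convention and is always a max because it has no successor in the algorithm, anchoring this slab on the horizontal side; every other non-block-end in the slab feeds into a chain whose ultimate max lies in a north-strip. Symmetrically, in $\{1,\ldots,b_{r+1}-1\}$ the value $b_{r+1}-1$ is always an increase (because $b_{r+1}$, as the last block-end, is the rightmost letter of $A$, so $b_{r+1}$ sits to the right of $b_{r+1}-1$) and is always a max (its successor $b_{r+1}$ is omitted), so it anchors this slab on the vertical side. The main obstacle I anticipate is the inductive step of the preceding paragraph: verifying that the required absorption really fires at each crossing, as opposed to being replaced by a pass-through R(III) because the $b_i$-chain has not yet grown down to $m$ when it meets the $(m-1)$-strip. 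I plan to handle this by a careful tracking of which labels appear at each crossing of the $b_i$-strip; if needed, I would appeal to Lemma~\ref{flip_lem} to reduce to a tiling in which these crossings are traversed in the convenient order.
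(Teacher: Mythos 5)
Your reduction of the lemma to the claim that each slab $(b_{i+1},b_i]$, $(b_1,n+1]$, $\{1,\ldots,b_{r+1}-1\}$ is delivered intact to the diagonal, horizontal, and vertical edges respectively is a correct reformulation, and your preliminary observations are sound: the terminal labels are disjoint consecutive runs, no run straddles a block-end or the omitted value $b_{r+1}$, and $n+1$ and $b_{r+1}-1$ anchor the two extremal slabs (the former being a decrease by convention, the latter an increase because $b_{r+1}$ is the rightmost letter of $A$). But the heart of the lemma is exactly the step you flag as your ``main obstacle'': showing that every run inside a slab merges up to the slab's anchor, i.e.\ that the needed fusions actually fire rather than being replaced by R(III) pass-throughs. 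As written this is asserted, not proved; the ``indirectly via an intermediate chain'' branch of your downward induction is not controlled, and the fallback appeal to Lemma~\ref{flip_lem} does not help, since that lemma compares weight generating functions across tilings and says nothing about the labels the fusion-exchange algorithm produces on a fixed tiling.

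The paper closes this gap by arguing in the contrapositive, which sidesteps the need to show that any particular fusion occurs. Suppose a terminal run $L$ with $L_{\max}>b_1$ sits on a vertical edge. By Lemma~\ref{max_lemma} it lies in the $L_{\max}$-strip, a west-strip, so $L_{\max}$ is an increase and $L_{\max}+1$ is to its left in $A$; moreover $L_{\max}+1$ is the minimum of some terminal run $J$ with $J\succ L$. If the trajectory of $L_{\max}+1$ ever crossed the $L_{\max}$-strip, then at the crossing tile a label with maximum $L_{\max}$ would sit on the vertical east edge and a label with minimum $L_{\max}+1$ on the south edge, and R(II) would \emph{force} a fusion absorbing $L_{\max}$, contradicting that $L_{\max}$ is a terminal maximum. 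Hence $J$ also terminates on a vertical edge (south of $L$), and iterating produces a terminal run with maximum $n+1$ on a vertical edge --- impossible, since $n+1$ is a decrease. Symmetric chains dispose of labels below $b_{r+1}$ on horizontal edges and of labels strictly between $b_{i+1}$ and $b_i$ on either non-diagonal edge type. The idea you are missing is that one never verifies that a fusion happens; one only needs that a fusion \emph{would be forced} at any hypothetical crossing, which rules out the crossing and propagates the wrong edge type up an ascending chain to a contradiction. I recommend replacing your absorption induction with this argument.
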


\begin{proof}
(1.) First, diagonal edge labels cannot contain any elements greater than $b_1$, since the $b_i$'s are never absorbed by a larger label. They also cannot contain any elements smaller than $b_{r+1}$ since no $b_i$ for $1 \leq i \leq r$ can absorb such elements, and $b_{r+1}$ is the auxiliary label and is not present in the tableau. 

(2.) Now suppose $L$ is a label with $L>b_1$ that is labeling a vertical edge at termination. Thus $L_{\max}$ is an \emph{increase}, and $L_{\max}+1$ is to its left at initiation. Now $L_{\max}+1$ is the smallest element in some label $J$ with $J \succ L$. Since $L_{\max}$ is never absorbed, it cannot cross the trajectory of $L_{\max}+1$, implying that $J$ is labeling a vertical edge south of $L$ at termination. We repeat the argument for $J$ until we must recursively conclude that some label $K$ with $K_{\max}=n+1$ is labeling a vertical edge at termination - a contradiction since $K$ is in the $n+1$-strip, and $n+1$ is a decrease. Thus any $L>b_1$ must necessarily be labeling a horizontal edge at termination. 

(3.) We have an analogous argument for the labels smaller than $b_{r+1}$. Suppose $L$ is a label with $L<b_{r+1}$ that is labeling a horizontal edge at termination. Thus $L_{\max}$ is a \emph{decrease}, and $L_{\max}+1$ is to its right at initiation. Now $L_{\max}+1$ is the smallest element in some label $J$ with $J \succ L$. Since $L_{\max}$ is never absorbed, it cannot cross the trajectory of $L_{\max}+1$, implying that $J$ is labeling a horizontal edge east of $L$ at termination. We repeat the argument for $J$ until we must recursively conclude that some label $K$ with $K_{\max}=b_{r+1}-1$ is labeling a horizontal edge at termination - a contradiction since $K$ is in the $b_{r+1}$-strip, and $b_{r+1}$ is an increase. Thus any $L>b_1$ must necessarily be labeling a vertical edge at termination. 

(4.) Now suppose $L$ is a label with $b_{i+1}<L<b_{i}$. If we suppose $L$ is labeling a vertical edge at termination, an almost identical argument to (2.) implies that $L$ must be absorbed by some label $K$ with $K_{\max}=b_i$. And if we suppose $L$ is labeling a horizontal edge at termination, we obtain the same with an argument almost identical to (3.). These two cases contain contradictions, and thus no such $L$ can be a label at termination. 

Since every element in $\{1,\ldots,n+1\}$ save for $b_{r+1}$ must appear in some label at termination, the lemma follows.
\end{proof}

\begin{lem}\label{label_order_lem}
The labels at termination are ordered. Let $L$ and $J$ be two labels with $J \succ L$. Then the following occurs:
\begin{enumerate}
\item[(i.)] If $L$ and $J$ are labeling horizontal edges, then $J$ is east of $L$.
\item[(ii.)] If $L$ and $J$ are labeling vertical edges, then $J$ is south of $L$.
\item[(iii.)] If $L$ and $J$ are labeling diagonal edges, then $J$ is north of $L$.
\end{enumerate}
\end{lem}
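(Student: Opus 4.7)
My plan is to handle the three cases separately, leveraging (a) the correspondence (established in the analyses of Lemmas \ref{max_lemma} and \ref{label_order}) between a label's terminal edge type and whether its maximum element is an increase, decrease, or block-end, and (b) the planar fact that edge-disjoint monotone NW trajectories in a rhombic tiling preserve the relative order of their endpoints from the SE to the NW boundary.

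For case (i), since $L$ and $J$ both terminate on horizontal edges, $x := L_{\max}$ and $z := J_{\max}$ must both be decreases, and in particular $x$'s label always sits on a horizontal edge throughout the algorithm. The key observation is that the trajectories of the labels containing $x$ and the labels containing $x+1$ must be edge-disjoint: if an edge $e$ were on both trajectories, then $x$ and $x+1$ would both lie in the common label on $e$, and since labels never split, they would remain in the same label all the way to termination, contradicting $x \in L$ and $x+1 \in J$ with $L \neq J$. Equivalently, if a label $A$ containing $x+1$ ever shared a processed tile with $x$'s label $B$, consecutivity would force $A \succ B$, and since $B$ lies on the horizontal south edge, rule R(I) would immediately fuse $A$ and $B$. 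Since $x$ is a decrease, $x+1$ appears to the left of $x$ in $A$, equivalently east of $x$ on the SE boundary at initiation (by the remark preceding Lemma \ref{max_lemma}). Planar monotonicity then preserves this east-ward ordering to termination, yielding $J$ east of $L$.

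Case (ii) is entirely symmetric: $x$ and $z$ are both increases, so $x$'s label always lies on a vertical edge; in the shared-tile argument $x$'s label now sits on the east (vertical) edge and $x+1$'s label on the south edge, triggering a fusion via R(II) instead of R(I). Since $x$ is an increase, $x+1$ lies to the right of $x$ in $A$ and hence south of $x$ on the SE boundary at initiation, so planar monotonicity yields $J$ south of $L$. Case (iii) requires no non-crossing argument. Here $x$ and $z$ are both block-ends with $z > x$; in the canonical ordering of the blocks of $A$ (block-ends strictly decreasing from left to right) the block containing $z$ lies strictly to the left of the block containing $x$, so $z$ appears to the right (i.e.\ more NE) of $x$ on the SE boundary. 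Two distinct northwest-strips are parallel diagonal lines in the diagram and hence preserve the NE-SW order of their endpoints from SE to NW, which forces $J$'s terminal to lie more NE --- i.e.\ more north --- than $L$'s terminal. The principal obstacle throughout is the \emph{monotone non-crossing implies order-preservation} step in cases (i) and (ii); I will justify it by noting that $x$'s trajectory coincides exactly with its strip (a rigid vertical column in case (i), a horizontal row in case (ii)) and that $x+1$'s trajectory is monotone NW and edge-disjoint from $x$'s, so it cannot cross over the strip.
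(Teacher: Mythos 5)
Your proof is correct and follows essentially the same route as the paper's: in all three cases the paper also reduces to the observation that if the trajectory of $x+1$ crossed the $x$-strip, rules R(I)/R(II) would fuse the labels and absorb $x=L_{\max}$, contradicting Lemma \ref{max_lemma}, with the increase/decrease dichotomy fixing which side of the $x$-strip the element $x+1$ starts on. The only differences are cosmetic: the paper argues by contradiction ("suppose $J$ is west of $L$") where you argue directly via edge-disjointness and planarity, and you spell out the consecutivity step ($\min A = x+1$ forces $A \succ B$) that the paper leaves implicit.
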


\begin{proof}
(i.) Suppose the contrary, that $J$ is west of $L$. Let $x=L_{\max}$. $x$ is horizontal, so $x+1$ must be to its \emph{right} at initiation. But $x+1 \in J$, so if $J$ is west of $L$, the trajectory of $x+1$ necessarily intersects the $x$-strip, which would result in $x$ being absorbed. By Lemma \ref{max_lemma}, this is a contradiction, so $J$ must be west of $L$.

(ii.) Suppose the contrary, that $J$ is north of $L$. Let $x=L_{\max}$. $x$ is vertical, so $x+1$ must be to its \emph{left} at initiation. But $x+1 \in J$, so if $J$ is north of $L$, the trajectory of $x+1$ necessarily intersects the $x$-strip, which would result in $x$ being absorbed. By Lemma \ref{max_lemma}, this is a contradiction, so $J$ must be south of $L$.

(iii.) This follows from Lemma \ref{max_lemma} and the fact that the block-ends are in decreasing order from northeast to southwest.
\end{proof}

\begin{lem}\label{blockend_lemma}
If $x = b_i$ is a block-end with $i<r+1$, then the $x$-strip has label $C \neq \emptyset$ at termination.
\end{lem}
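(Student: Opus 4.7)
The plan is to show that the label originating as $\{b_i\}$ on the southeast boundary of $\Gamma(X(A))$ is never converted to $\emptyset$ by the fusion-exchange algorithm; by Lemma \ref{max_lemma} this is exactly what is needed for the $b_i$-strip to carry a non-empty label at termination.

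First, I would identify the strip in question. Since $b_i$ is a block-end with $i \le r$, the corresponding entry of $X(A)$ is a \light, whose southeast-boundary edge is a southwest (diagonal) edge. Hence the $b_i$-strip is a northwest-strip, and its shared edges inside the tiling are all diagonal (they belong either to tall or to short rhombi). By Lemma \ref{max_lemma}, throughout the algorithm the label whose maximum is $b_i$ lives on diagonal edges of this strip.

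Next, I would read off from Definition \ref{F_E} exactly when a nonempty label can be replaced by $\emptyset$ on an outgoing edge. This happens only in R(I) (which requires the incoming south label $B$ to lie on a horizontal edge and kills the $B$-strip by putting $\emptyset$ on the north edge) and in R(II) (which requires the incoming east label $A$ to lie on a vertical edge and kills the $A$-strip by putting $\emptyset$ on the west edge). In R(III) the labels simply pass through, so no non-empty label is ever emptied.

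Combining these two observations finishes the argument. Whenever the $b_i$-strip enters a tile, its label sits on a diagonal edge --- either the diagonal south of a tall tile or the diagonal east of a short tile --- so it cannot be the ``absorbed'' side of R(I) or R(II). It may instead play the role of the absorber (R(II) applied at a tall tile if $B \succ A$, R(I) applied at a short tile if $A \succ B$), in which case it only grows and continues to exit on a diagonal edge; otherwise the step is R(III) and the label passes through unchanged. Iterating along the strip, the $b_i$-label is nonempty at every stage, and in particular at termination.

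The only delicate point is the bookkeeping in the third step: one must check, tile shape by tile shape, that whenever the $b_i$-strip is on the absorbing side of R(I) or R(II), the outgoing edge carrying the enlarged label is again diagonal, so that the invariant ``the $b_i$-label lies on a diagonal edge'' persists. This is immediate from the east/south/west/north conventions for the three tile shapes and a glance at Figure \ref{rules}, so I do not anticipate any substantial obstacle.
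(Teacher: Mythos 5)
Your proof is correct and follows essentially the same route as the paper's: the paper's (much terser) argument likewise observes that by R(III) a label on a diagonal edge can never be absorbed, and then invokes the inductive tracking from the proof of Lemma \ref{max_lemma} to conclude the $b_i$-strip keeps a nonempty label. Your additional tile-by-tile check that the label always exits on a diagonal edge is exactly the bookkeeping the paper leaves implicit.
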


\begin{proof}
According to R(III) of the fusion-exchange algorithm, a label on a diagonal edge can never be absorbed by another label. The Lemma follows by the proof of Lemma \ref{max_lemma}.
\end{proof}

\begin{remark}
Note that Lemmas \ref{label_order_lem} and \ref{blockend_lemma} imply the complete ordering at termination as shown in Figure \ref{fig_order}. Let the nonempty labels of the north-strips at termination be $K_1,\ldots,K_v$ from left to right. Let the nonempty labels of the west-strips at termination be $I_1,\ldots,I_u$ from bottom to top. Let the labels of the north-west strips at termination be $J_1,\ldots,J_t$ from top to bottom. Then $K_v \succ \cdots \succ K_1 \succ J_1 \succ \cdots \succ J_t \succ (b_{r+1}) \succ I_1 \succ \cdots \succ I_u$. See Figure \ref{fig_fusion_full} for an example.
\end{remark}

\begin{figure}[h!]
  \centerline{\includegraphics[width=2in]{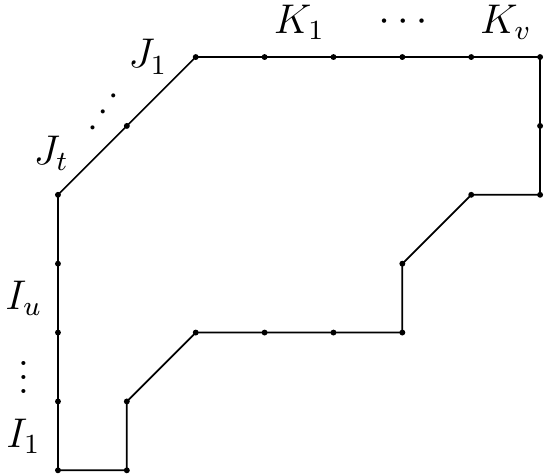}}
 \centering
  \caption{The labels appearing at termination, with $K_v \succ \cdots \succ K_1 \succ J_1 \succ \cdots \succ J_t \succ (b_{r+1}) \succ I_1 \succ \cdots \succ I_u$.}\label{fig_order}
\end{figure}


\begin{lem}\label{lrs_rls}
Let $A$ be an assembl\'{e}e of size $(n+1,r+1)$.
\begin{enumerate}
\item[(i.)] The largest elements of the labels of the horizontal edges of $T(A)$ at termination are precisely the elements of $\lrs(A)$ in decreasing order from left to right in $A$.
\item[(ii.)] The largest elements of the labels of the vertical edges of $T(A)$ at termination are precisely the elements of $\rls(A)$ in decreasing order from right to left in $A$.
\end{enumerate}
\end{lem}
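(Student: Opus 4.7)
My plan is to prove part (i) in detail; part (ii) follows by a symmetric argument exchanging the roles of horizontal and vertical edges, of ``largest'' with ``smallest'', and of ``right'' with ``left'' throughout. By Lemma \ref{label_order} and the remark following Lemma \ref{blockend_lemma}, the nonempty horizontal-edge labels $K_1 \prec K_2 \prec \cdots \prec K_v$ at termination partition $\{b_1+1,\ldots,n+1\}$ into consecutive integer intervals, ordered left-to-right along the NW boundary by Lemma \ref{label_order_lem}(i). Writing $m_j := \max K_j$, it suffices to show $\{m_1,\ldots,m_v\} = \lrs(A)$ and then verify the claimed ordering.

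The key reduction is: for $x \in \{b_1+1,\ldots,n+1\}$, $x$ is the maximum of some $K_j$ if and only if no $y > x$ appears to the right of $x$ in $A$. Since $x$ lies in the interior of some $K_j$ iff $x$ and $x+1$ occur in the same terminal label iff a carrier containing $x+1$ eventually absorbs $x$, the claim reduces to showing $x$ gets absorbed during the algorithm if and only if some $y > x$ lies to the right of $x$ in $A$. For the direction $x \in \lrs(A) \Rightarrow x$ survives, I would adapt the contradiction strategy used in the proof of Lemma \ref{label_order}: any absorption of $x$ would require a carrier labeled $\{x+1,\ldots,y\}$ for some $y > x$ to meet $x$'s carrier from the east in an R(I) or R(II) configuration, and by Lemma \ref{max_lemma} that absorber lives in the $y$-strip, whose SE edge is strictly northeast of $x$'s (since $y$ lies to the left of $x$ in $A$); recursing on $y$ and exploiting the confinement that carriers starting northeast of $x$ cannot cross its trajectory from the correct side, one eventually forces the $(n+1)$-carrier into an impossible position, giving a contradiction. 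Conversely, for $x \notin \lrs(A)$, let $y$ be the minimal element greater than $x$ appearing to the right of $x$ in $A$; then $x+1,\ldots,y-1$ all lie to the left of $x$, and a descending induction on $x$ shows that the $y$-carrier fuses with them into a single label of the form $\{x+1,\ldots,y'\}$ (with $y' \geq y$), which then meets $x$'s carrier in the appropriate tile and absorbs it.

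The ordering assertion then follows from Lemma \ref{label_order_lem}(i): since $K_1 \prec \cdots \prec K_v$ run left to right along the NW boundary with strictly increasing maxima, the corresponding elements of $\lrs(A)$ appear in strictly decreasing order of value when read from left to right in $A$ (a property automatic for right-to-left maxima). The main obstacle I anticipate is making the ``carriers cannot overtake $x$'' argument rigorous within the rhombic geometry, where three distinct strip types may interact; the invariants established in Lemma \ref{max_lemma} together with the structural observations underlying Lemma \ref{label_order} should suffice, but care is needed to systematically rule out all absorption scenarios when $x \in \lrs(A)$.
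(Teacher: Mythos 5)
Your high-level frame matches the paper's: reduce to showing that the maxima of the terminal horizontal labels are exactly $\lrs(A)$, invoke Lemma \ref{label_order} for the partition of $\{b_1+1,\ldots,n+1\}$ into intervals, and get the ordering from Lemma \ref{label_order_lem}. But the core step --- the characterization of which elements survive as maxima --- is where your argument breaks, because you try to track the \emph{dynamics} of absorption, and the mechanism you describe is wrong. In the converse direction you claim that for $x\notin\lrs(A)$, the carrier of $y$ (the minimal element $>x$ lying to the right of $x$ in $A$) fuses with $x+1,\ldots,y-1$ and then ``meets $x$'s carrier in the appropriate tile and absorbs it.'' Take the running example $A=[2,10,12,7]\,[5,9,1,8,6]\,[3,11,4]$ with $x=8$, so $y=11$. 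Both $8$ and $11$ are decreases, so both live in north-strips, and two north-strips never share a tile: the $11$-carrier \emph{cannot} absorb $8$ directly. What actually happens is that the $10$-carrier (a west-strip) absorbs $9$, then absorbs $8$ as the label $\{9,10\}$, and only later is itself absorbed by $\{11\}$; in particular your claimed absorber $\{x+1,\ldots,y'\}$ with $y'\geq y$ fails, since here $y'=10<11=y$. Your forward direction rests on a ``confinement'' assertion that carriers starting northeast of $x$ cannot cross $x$'s trajectory from the correct side --- but they can: a west-strip rooted northeast of $x$'s edge does cross the $x$-north-strip from the east, which is precisely the R(I) configuration. The true obstruction is arithmetic (the absorber's minimum must equal $x+1$), not geometric, and your sketch does not supply it.

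The paper avoids all of this by arguing only at termination, using one geometric fact: since label trajectories move only north and west and the $J_{\max}$-strip emanates from the southeast boundary, every element of $J\setminus\{J_{\max}\}$ must have started to the right of $J_{\max}$ at initiation, i.e.\ to the left of $J_{\max}$ in $A$. Combined with Lemma \ref{label_order_lem} (terminal horizontal labels $J>L$ sit east of $L$, hence $J_{\max}$ starts right of $L_{\max}$ at initiation), this gives both inclusions at once: every $y>L_{\max}$ is left of $L_{\max}$ in $A$ (so $L_{\max}\in\lrs(A)$), and every non-maximal $x$ of a terminal label $K$ has $K_{\max}>x$ to its right in $A$ (so $x\notin\lrs(A)$). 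I'd recommend replacing your absorption-tracking argument with this termination-state argument; as written, your proof has a genuine gap in both directions of the key equivalence. (A minor further point: for part (ii) the correct dictionary is not ``largest $\leftrightarrow$ smallest'' --- one still takes the largest elements of the vertical labels --- but rather $n+1\leftrightarrow b_{r+1}-1$, horizontal $\leftrightarrow$ vertical, east $\leftrightarrow$ south, and left $\leftrightarrow$ right.)
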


\begin{proof}
(i.) Let $L$ be a label of a horizontal edge at termination. $L$ is labeling the $L_{\max}$-strip. If $L_{\max}=n+1$, it is in $\lrs(A)$ by default. For $L_{\max}<n+1$, let $J$ be a label with $J> L$. By Lemma \ref{label_order_lem} we have that at termination $J$ is labeling a horizontal edge east of the $L_{\max}$-strip. Labels travel from southeast to northwest, so the elements of $J\backslash J_{\max}$ are all to the right of $J_{\max}$ at initiation. Since this is true for all $J>L$, any $y>x$ must be to the \emph{left} of $L_{\max}$ in $A$. Thus $L_{\max} \in \lrs(A)$. 

For the converse, for any $K$ labeling a horizontal edge at termination and for any $x < K_{\max}$, by the same argument $x$ must have $K_{\max}$ to its \emph{right} in $A$. Consequently $x \in \lrs(A)$ if and only if $x = K_{\max}$ for some label $K$ of a horizontal edge at termination. The elements of $\lrs{A}$ are in decreasing order from left to right in $A$ also by Lemma \ref{label_order_lem}.

(ii.) We have the same argument as for (i.) after replacing $n+1$ by $b_{r+1}-1$, $\lrs(A)$ by $\rls(A)$, horizontal by vertical, east by south, and swapping left with right. Let $L$ be a label of a vertical edge at termination. $L$ is labeling the $L_{\max}$-strip. If $L_{\max}=b_{r+1}-1$, it is in $\rls(A)$ by default. For $L_{\max}<b_{r+1}-1$, let $J$ be a label with $J> L$. By Lemma \ref{label_order_lem} we have that at termination $J$ is labeling a vertical edge south of the $L_{\max}$-strip. Labels travel from southeast to northwest, so the elements of $J\backslash J_{\max}$ are all to the left of $J_{\max}$ at initiation. Since this is true for all $J>L$, any $y>x$ must be to the \emph{right} of $L_{\max}$ in $A$. Thus $L_{\max} \in \rls(A)$. 

For the converse, for any $K$ labeling a vertical edge at termination and for any $x < K_{\max}$, by the same argument $x$ must have $K_{\max}$ to its \emph{left} in $A$. Consequently $x \in \rls(A)$ if and only if $x = K_{\max}$ for some label $K$ of a vertical edge at termination. The elements of $\rls{A}$ are in decreasing order from right to left in $A$ also by Lemma \ref{label_order_lem}.
\end{proof}

Finally we obtain the proof of our main result for this section, Theorem \ref{forward_thm}.

\begin{proof}[Proof of Theorem \ref{forward_thm}]
By R(I) a north-strip of $T(A)$ contains an $\alpha$ if and only if it is labeled by $\emptyset$ at termination, and by $R(II)$ a west-strip of $T(A)$ contains a $\beta$ if and only if it is labeled by $\emptyset$ at termination. By Lemma \ref{lrs_rls}, every north-strip with a nonempty label corresponds to an element of $\lrs(A)$, and every west-strip with a nonempty label corresponds to an element of $\rls(A)$.

As given in Definition \ref{weight}, the weight of a rhombic alternative tableau $T$ of size $(n,r)$ is equivalent to the following:
\[\wt(T) = (\alpha\beta)^{n-r}\alpha^{-i}\beta^{-j},\]
where $i$ is the number of $\alpha$-free north-strips (i.e. north-strips not containing $\alpha$) and $j$ is the number of $\beta$-free west-strips (i.e. west-strips not containing $\beta$). Equivalently, we have $i=|\lrs(A)|$ and $j=|\rls(A)|$, which completes the proof of the theorem.
\end{proof}

\subsection{Rhombic alternative tableaux to assembl\'{e}es}\label{sec_reverse}

In this section, we describe an algorithm that serves as the inverse of the fusion-exchange algorithm of Section \ref{sec_forward}, thereby establishing a weight-preserving bijection from RAT of size $(n,r)$ to $\mathcal{A}_{n+1,r+1}$. Let $T$ be a RAT of type $X$ with arbitrary tiling $\mathcal{T}$ and size $(n,r)$, such that $X$ has $k$ \heavy's and $\ell$ \hole's (with $k+\ell+r=n$). We will construct from $T$ an assembl\'{e}e $A(T)$ of size $(n+1,r+1)$.


First, a forest of crossing binary trees is associated to $T$, starting with a network of lines passing through the west-, north- and northwest- strips as in Figure \ref{strips}. For each west-strip, a line is drawn through the midpoints of the vertical edges of each tile of the strip, indicated by red. Corresponding lines for the north-strips and the north-west strips are drawn, indicated by blue and green respectively.

We note here that the colors red, blue, and green are not essential to the algorithm, but they greatly facilitate its visualization. In particular, red always corresponds to west-strips, blue to north-strips, and green to northwest-strips.

Now, for each tile containing an $\alpha$, the section of the blue line north of that $\alpha$ is removed. Similarly, for each tile containing a $\beta$, the section of the red line west of that $\beta$ is removed. As seen in Figure \ref{reverse_init}, we obtain a forest of binary trees, where the vertices (or branching nodes) of the trees are in the tiles containing $\alpha$ or $\beta$. Moreover, a tile contains two intersecting branches (either from different trees or from the same tree) if and only if it contains a $q$. Finally, each tree has a \emph{root} starting on the northwest border of $T$. We say the roots belong to the classes \emph{north, west}, or \emph{northwest}, corresponding to the types of strips at which the roots are located. 

\begin{defn} We identify each edge on the southeast boundary of $T$ with an \emph{external vertex}, which corresponds to a leaf of some binary tree. In a tableau of size $n$, there are $n$ external vertices. 
\end{defn}

By our convention, we also add a special trivial northwest tree/root to the southwest point of our tableau. Note that by this construction, the forest associated to a RAT of size $(n,r)$ with $i$ $\alpha$-free north-strips and $j$ $\beta$-free west-strips, has $i$ trees with a north root, $j$ trees with a west root, and $r+1$ trees with a northwest root (including the special trivial tree). 

\begin{example}
In Figure \ref{reverse_init}, the trivial root is denoted by the label ``$(4)$''. In this same example, the north roots have labels $\{(8,9,10,11), (12)\}$, the west roots have labels $\{(1,2),(3)\}$, and the northwest roots have labels $\{(5,6), (7), (4)\}$. 
\end{example}

\begin{remark}
The binary trees are drawn on the tiling $\mathcal{T}$, and a ``flip'' on the tiling corresponds to a certain local move on the branches of the tree, similar to a Yang-Baxter move. 
\end{remark}

%

\begin{figure}[!h]
  \centerline{\includegraphics[width=3in]{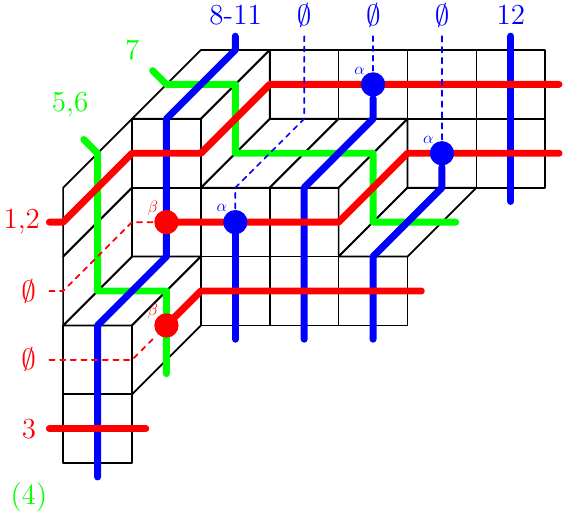}}
\centering
 \caption{The forest of binary trees corresponding to the rhombic alternative tableau of Figure \ref{RAT_example}.}\label{reverse_init}
 \end{figure}

The roots of the three classes of binary trees, including the special trivial green tree, are totally ordered according to the following definition.

\begin{defn}\label{ordering} The roots are in increasing order starting with the $j$ red roots from top to bottom, then the $r+1$ green roots from southwest to northeast, and finally the $i$ blue roots from left to right. Each root of a binary tree $\mathcal{B}$ is labeled by some label $B$ according to the three following conditions:

\begin{itemize}
\item The size of $B$ is one if it is the special trivial tree, otherwise it equals the number of external vertices of $\mathcal{B}$.
\item  For any two consecutive root edges $R$ and $R'$ (where $R'$ is greater than $R$ in the total order) the corresponding labels $B$ and $B'$ satisfy  $B' \succ B$.
\item  The union of the labels of the root edges is $\{1,\ldots,n+1\}$.
\end{itemize}
\end{defn}


Figure \ref{reverse_init} shows an example of the labeling of the root edges. Note that these labels precisely correspond to the labels of $T(A)$ of Figure \ref{fig_fusion_full} at termination of the exchange fusion algorithm.

%

\begin{defn}[Label-passing algorithm]
After assigning the labels to the roots of the forest associated to a RAT, the labels are passed from the roots along the branches to the southeast. Whenever a label $B$ is at a vertex $v$, it is split into two disjoint labels $C$ and $D$ such that $C \cup D = B$, and $C$ and $D$ are passed along the branches going southeast from $v$. The branch that is passed the label $C$ must be connected to $|C|$ external vertices, and the branch that is passed the label $D$ must consequently be connected to $|D|$ external vertices. Setting $D \succ C$, we have the following cases for $v$.
\begin{enumerate}
\item[I.] If $v$ is in a northwest-strip, then the branch going out of $v$ in the southeast direction is passed the label $D$. (That is, the larger label is always passed down the green northwest line.)
\item[II.] Otherwise, if $v$ corresponds to an $\alpha$, then the branch going out of $v$ in the east direction is passed the label $D$, and the branch going out of $v$ in the south direction is passed the label $C$. (That is, if $v$ corresponds to an $\alpha$, the smaller label is always passed down the blue west line.)
\item[III.] Similarly, if $v$ corresponds to a $\beta$, then the branch going out of $v$ in the south direction is passed the label $D$, and the branch going out of $v$ in the east direction is passed the label $C$. (That is, if $v$ corresponds to a $\beta$, the smaller label is always passed down the red north line.)
\end{enumerate}

The algorithm is complete once every external vertex has received a label that consists of a single integer. An assembl\'{e}e is then obtained by reading these external vertex labels from northeast to southwest, and assigning the labels of the green diagonal external vertices to be the block-ends. The final block-end is assigned to be the integer in $\{1,\ldots,n+1\}$ that is the label of the special trivial root added at the beginning.
\end{defn}

 \begin{figure}[h!]
  \centerline{\includegraphics[width=0.8\linewidth]{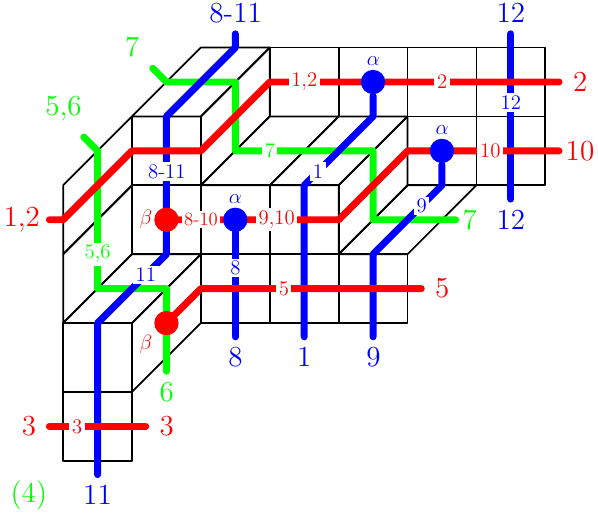}}
\centering
 \caption{The complete label-passing algorithm applied to the tableau $T$ of Figure \ref{RAT_example}, with the algorithm initiated in Figure \ref{reverse_init}, and resulting in $A(T)=[2,10,12,7]\ [5,9,1,8,6]\ [3,11,4]$.}\label{exfusalgo}
 \end{figure}

We see an example of the completed algorithm in Figure \ref{exfusalgo}. Note that this example is essentially the same as the example for the fusion-exchange algorithm of Figure \ref{fig_fusion_full}. Comparing these two examples, one can see that the labels passed along the branches in label-passing are the same as the labels on the edges in fusion-exchange, and so we can identify the labels on the branches with the labels on the edges they pass through. With this identification, a step of the fusion-exchange algorithm has as its inverse a step of the label-passing algorithm, and vice versa. That implies that label-passing is well-defined as the inverse of the fusion-exchange algorithm. This is not hard to see, but we provide the proof below.


%
%
%
%

\begin{thm}\label{thm_reverse}
At its conclusion, the label-passing algorithm results in an assembl\'{e}e $A(T)$ that is read off the labels on the edges of the southeast boundary of $T$ from right to left, such that $\shape(A(T)) = \shape(T)$. Moreover, $\wt(T)=(\alpha\beta)^{n-r} \alpha^{-i}\beta^{-j}$ at $q=1$ where $i=|\lrs(A(T))|$ and $j=|\rls(A(T))|$.
\end{thm}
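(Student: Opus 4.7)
The plan is to establish that the label-passing algorithm is the local, tile-by-tile inverse of the fusion-exchange algorithm from Section \ref{sec_forward}. Once this local inversion is proved, the fact that the output is an assemblée with $\shape(A(T)) = \shape(T)$ becomes automatic, and the weight formula drops out immediately from Theorem \ref{forward_thm} applied to the assemblée $A(T)$.

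The first step would be to verify that every split performed at an internal vertex is well-defined, i.e., that the label arriving at a vertex $v$ is always an interval of consecutive integers, and there is a unique way to split it as $C \cup D$ with $D \succ C$ so that $|C|$ and $|D|$ equal the number of external vertices lying in the two subtrees descending from $v$. This follows by induction on the depth of $v$: the root labels specified in Definition \ref{ordering} are intervals by construction; each of the operations I--III splits an interval into two adjacent intervals, so the invariant is preserved. The matching of sizes is then forced because the set of external vertices reachable from $v$ is partitioned exactly into those reachable through each of the two descending branches.

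The crux is a tile-by-tile check that label-passing reverses fusion-exchange. Consider a tile $t$ whose west and north edges carry the labels that label-passing would deliver to it from the northwest. If $t$ contains an $\alpha$, then case II sends the larger piece $D$ east and the smaller $C$ south, so the east and south labels satisfy $D \succ C$ with $D$ on a horizontal edge; in fusion-exchange, these are precisely the east- and south-edge inputs triggering R(I), producing an $\alpha$, sending $C \cup D$ to the west edge, and $\emptyset$ to the north edge, i.e.\ recovering the incoming labels of label-passing. An analogous check matches case III with R(II) via a $\beta$, and case I with R(III) when $t$ lies in a northwest-strip. The remaining subcases of R(III), where a $q$ is placed or the tile is empty, are handled by noting that the two crossing labels simply pass through (in both algorithms) and the placement of $q$ is determined by whether either incoming label is $\emptyset$. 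The consistency between root labels given by Definition \ref{ordering} and the northwest-boundary labels produced at termination of fusion-exchange is exactly what Lemma \ref{label_order_lem} and the remark following Lemma \ref{blockend_lemma} provide.

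Once local inversion is established, reading the labels on the southeast edges from right to left produces a sequence of singletons whose grouping into blocks by the diagonal edges (with the final block-end supplied by the trivial root added at the southwest corner) is an assemblée $A(T) \in \mathcal{A}_{n+1,r+1}$; and by local inversion $T(A(T)) = T$, so $\shape(A(T)) = \shape(X(A(T))) = \shape(T)$. Theorem \ref{forward_thm} applied to $A(T)$ then yields $\wt(T) = (\alpha\beta)^{n-r}\alpha^{-i}\beta^{-j}$ at $q=1$ with $i = |\lrs(A(T))|$ and $j = |\rls(A(T))|$. I expect the main obstacle to be bookkeeping: verifying that the subtree-size constraints on $|C|$ and $|D|$ at each split are consistent with the interval-splitting structure, and that the total order on roots of Definition \ref{ordering} coincides with the termination order $K_v \succ \cdots \succ K_1 \succ J_1 \succ \cdots \succ J_t \succ (b_{r+1}) \succ I_1 \succ \cdots \succ I_u$ from the remark after Lemma \ref{blockend_lemma}. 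With those two bookkeeping points in hand, the theorem becomes a direct translation of the forward result.
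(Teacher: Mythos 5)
Your strategy---establish local, tile-by-tile inversion of fusion-exchange first, then pull everything back through Theorem \ref{forward_thm}---is essentially the content of the paper's Theorem \ref{thm_bijection}, which the paper proves separately and \emph{after} this theorem. The difficulty is that the step you call ``automatic'' is where the real work of this theorem lives, and your derivation of it is circular. To apply Theorem \ref{forward_thm} to $A(T)$ you must run fusion-exchange on the diagram $\Gamma(X(A(T)))$, and to identify the output with $T$ you need $\Gamma(X(A(T)))$ to coincide with the diagram of $T$, i.e.\ $X(A(T))=X$. Since $X(A)$ is built by turning decreases into holes and increases into heavy particles, this is precisely the assertion that every horizontal external-vertex label of the forest is a decrease of $A(T)$ and every vertical one is an increase---conditions (i.) and (ii.) in the paper's proof. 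That assertion is the first claim of the theorem ($\shape(A(T))=\shape(T)$), and it does not follow from the local reversibility of the splitting rules alone: local inversion only tells you that fusion-exchange run \emph{on the diagram of $T$} with initiation $A(T)$ reproduces $T$, not that this diagram is the one canonically attached to the word $A(T)$. You write ``by local inversion $T(A(T))=T$, so $\shape(A(T))=\shape(T)$,'' but $T(A(T))$ is by definition computed on $\Gamma(X(A(T)))$, so this inference presupposes the shape equality it is meant to prove. The paper closes exactly this gap with a direct analysis of label-passing: the case split on whether the $x$-strip contains an $\alpha$, using Lemma \ref{no_alpha} to locate $A_{\max}$ at the external vertex and then tracing that every element of the larger piece $B\ni x+1$ lands northeast of the $x$-strip. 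Your proposal contains no substitute for that argument. (One could instead finish your route by a counting argument---injectivity of $A\mapsto T(A)$ plus equinumerosity of the two Lah-number-counted sets forces surjectivity, hence every $T$ equals $T(A)$ for some genuine assembl\'ee---but you do not invoke that, and it would require the independent enumeration of the RAT.)

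A smaller point in your tile-by-tile check: for an $\alpha$-tile, rule R(I) requires the \emph{south}-edge label (your $C$, the smaller piece, which is sent south) to sit on a horizontal edge; you wrote ``$D \succ C$ with $D$ on a horizontal edge,'' but $D$ is delivered to the east edge, which is vertical or diagonal, never horizontal. The matching of case II with R(I) still works once the roles are straightened out, but as written the triggering condition is misstated.
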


The following lemmas are needed for the proof of the theorem.

\begin{lem}\label{no_alpha}
Suppose a label $J$ is passed down a north-strip $\mathbf{n}$ from a tile $\mathbf{t}$ such that there are no $\alpha$'s below $\mathbf{t}$ in $\mathbf{n}$. Then at the completion of the label-passing algorithm, $J_{\max}$ is labeling the external vertex of $\mathbf{n}$.
\end{lem}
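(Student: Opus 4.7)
The plan is to trace the label $J$ along the blue (north-strip) line as it descends southward from $\mathbf{t}$, maintaining an invariant on the max element. Since by hypothesis no tile of $\mathbf{n}$ below $\mathbf{t}$ contains an $\alpha$, every such tile must contain either a $\beta$, a $q$, or be empty, and at each of these three possibilities the passage of a label through the tile is simple enough to analyze directly.

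Concretely, I would list the tiles of $\mathbf{n}$ below $\mathbf{t}$ as $\mathbf{t} = \mathbf{t}_0, \mathbf{t}_1, \ldots, \mathbf{t}_k$, with $\mathbf{t}_k$ the southernmost, and let $J_i$ denote the label appearing on the blue line between $\mathbf{t}_i$ and $\mathbf{t}_{i+1}$ (so $J_0 = J$). I would then prove by induction on $i$ the invariant that $J_i \ne \emptyset$ and $(J_i)_{\max} = J_{\max}$. For the inductive step, there are three cases for $\mathbf{t}_{i+1}$: if it is empty or contains a $q$, the labels simply cross through, so $J_{i+1} = J_i$; if it contains a $\beta$, then by Rule III the label $J_i$ arriving from the north is split into $C \cup D = J_i$ with $D \succ C$, the larger part $D$ is sent south along the blue line, giving $J_{i+1} = D$, and since $D \succ C$ implies $D$ contains the maximum of $J_i$, the invariant is preserved.

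The final step is to extract from the invariant the conclusion about the external vertex. The south edge of $\mathbf{t}_k$ is by construction the external vertex of $\mathbf{n}$ on the southeast boundary. By the sizing convention built into the label-passing algorithm (each branch carries a label whose size equals the number of external vertices it feeds into), the label reaching any leaf must be a singleton; combining this with $(J_k)_{\max} = J_{\max}$ forces $J_k = \{J_{\max}\}$, which is exactly the desired conclusion.

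The one subtlety I expect to be the main point of care, rather than a genuine obstacle, is the $\beta$ case: one must be sure that at a $\beta$-tile encountered along the blue strip, the blue line is indeed the \emph{incoming} branch (from the north) and that Rule III then sends the larger label $D$ south along blue while the smaller $C$ exits east along red. This is unambiguous from the construction of the forest (the red line is cut west of $\beta$, so red-west cannot be the incoming direction), so the argument goes through cleanly; the remaining cases ($q$ and empty tiles) are immediate because no branching node lies on the strip there.
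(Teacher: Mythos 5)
Your proof is correct and takes essentially the same route as the paper's: both arguments come down to the observation that below $\mathbf{t}$ the label travelling along $\mathbf{n}$ can only split at a $\beta$-vertex, where the larger part $D$ of $C\cup D$ with $D\succ C$ (hence the part containing the maximum) is the one sent south, so the maximum survives to the singleton at the external vertex. The only cosmetic difference is that you induct on position along the strip while the paper inducts on $|J|$.
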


\begin{proof} A simple inductive proof suffices. If $J=\{x\}$, then the claim trivially holds. Now suppose our claim holds for all $|J|<k$. Let $|J|=k>1$. Then there must be at least one vertex south of $\mathbf{t}$ in $\mathbf{n}$ at which $J$ splits. This vertex must correspond to a $\beta$. Consequently, $J=A \cup B$ with $B \succ A$ for some $A$ and $B$, and $A$ is sent along the branch east of that vertex, and $B$ is sent along branch $\mathbf{n}$. Now $|B|<k$ and $x = \max\{y \in B\}$, so our claim holds by induction.
\end{proof}

Similarly, we have the following symmetric lemma, for which we omit the proof.

\begin{lem}\label{no_beta}
Suppose a label $J$ is passed along a west-strip $\mathbf{w}$ from a tile $\mathbf{t}$ such that there are no $\beta$'s to the right of $\mathbf{t}$ in $\mathbf{w}$. Then at the completion of the label-passing algorithm, $L_{\max}$ is labeling the external vertex of $\mathbf{w}$.
\end{lem}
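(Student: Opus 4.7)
My plan is to mirror the proof of Lemma \ref{no_alpha} essentially verbatim, exchanging north-strips with west-strips, $\alpha$'s with $\beta$'s, and the ``south'' direction with ``east.'' (I read the ``$L_{\max}$'' in the statement as $J_{\max}$, the label $J$ from the hypothesis.) The argument is induction on $|J|$.

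For the base case $|J|=1$, the singleton $J=\{x\}$ with $x = J_{\max}$ cannot split at any vertex, since splitting would require one of the two outgoing labels to be empty, contradicting the convention that $D \succ \emptyset$ is disallowed. Hence $J$ propagates east along $\mathbf{w}$ through every $q$-tile or non-branching crossing, reaching the external vertex of $\mathbf{w}$ unchanged.

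For the inductive step, assume the claim for all labels of size less than $k$, and let $|J|=k>1$. Since each external vertex carries a singleton label at termination, $J$ must split at some vertex east of $\mathbf{t}$ in $\mathbf{w}$. By hypothesis no $\beta$'s lie east of $\mathbf{t}$ in $\mathbf{w}$, so this splitting vertex is not a $\beta$. The only other branching vertices are $\alpha$-tiles, and since $\alpha$'s only occur in tiles of a north-strip, the tiles in a west-strip that can contain an $\alpha$ are precisely the squares. So the first splitting vertex east of $\mathbf{t}$ is an $\alpha$-square, where case II of the label-passing algorithm applies: $J = A \cup B$ with $B \succ A$, the smaller part $A$ is sent south, and the larger part $B$ continues east along $\mathbf{w}$. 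Since $|B| < k$, $B_{\max} = J_{\max}$, and no $\beta$'s lie east of this $\alpha$-square in $\mathbf{w}$ either, the inductive hypothesis applied to $B$ finishes the proof.

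The only thing to verify carefully is that $J$ indeed remains on $\mathbf{w}$ until it either splits at an $\alpha$-square or reaches the external vertex. This is immediate from the forest construction: the red line of $\mathbf{w}$ is cut only at $\beta$-tiles, and at any tall rhombus or square containing a $q$, the red line simply crosses an incident green or blue line without branching. Given the direct duality with Lemma \ref{no_alpha}, I do not anticipate any genuine obstacle; the only mild subtlety is making explicit that $\alpha$-tiles on a west-strip must be squares, which is used to justify invoking case II (rather than case I) of the label-passing rules.
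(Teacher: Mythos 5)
Your proof is correct and is essentially the argument the paper intends: the paper omits the proof of Lemma \ref{no_beta} as symmetric to Lemma \ref{no_alpha}, and your induction on $|J|$ (splitting only at $\alpha$-vertices, with the larger part $B$ continuing east along $\mathbf{w}$ by case II, so $B_{\max}=J_{\max}$) mirrors that proof exactly, including the correct reading of the statement's ``$L_{\max}$'' as $J_{\max}$. Your added observation that an $\alpha$ on a west-strip must sit in a square (so case II rather than case I applies) is a worthwhile detail the paper leaves implicit.
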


\begin{proof}[Proof of Theorem \ref{thm_reverse}]
The four conditions below are sufficient to show $\shape(A(T))$, $|\lrs(A(T))|$, and $|\rls(A(T))|$ have the desired properties.
\begin{enumerate}
\item[i.] The label of every horizontal external vertex of the forest on $A(T)$ is a decrease of $A(T)$.
\item[ii.] The label of every vertical external vertex of the forest on $A(T)$ is an increase of $A(T)$.
\item[iii.] A north-strip of $T$ does not contain an $\alpha$ if and only if its external vertex label belongs to $\lrs(A(T))$.
\item[iv.] A west-strip of $T$ does not contain a $\beta$ if and only if its external vertex label belongs to $\rls(A(T))$.
\end{enumerate}

Suppose at the termination of the label-passing algorithm, $x$ is the label of a horizontal external vertex of the forest on $A(T)$. Let us call the north-strip containing $x$ the $x$-strip. We dissect two cases: if this strip contains $\alpha$, and if it does not contain $\alpha$.

If the $x$-strip contains an $\alpha$, we consider the tile $\mathbf{t}$ with that $\alpha$. At the vertex in $\mathbf{t}$, the label that enters the vertex from the root is $J = A \cup B$ with $B \succ A$ for some $A$ and $B$, and $B$ is passed to the right branch out of this vertex, and $A$ is passed to the south branch out of this vertex. Since there are no other $\alpha$'s south of $\mathbf{t}$, Lemma \ref{no_alpha} implies that $x = A_{\max}$. Now, the label-passing algorithm sends every element in $B$ to the northeast of the $x$-strip. Thus, since $x+1 \in B$, at termination $x+1$ is labeling an edge to the northeast of $x$, and so $x+1$ is to the \emph{left} of $x$ in $A$. $x$ is a decrease as desired, giving us (i).

If the $x$-strip does not contain an $\alpha$, Lemma \ref{no_alpha} implies that $x=J_{\max}$ where $J$ is the label of the root of the $x$-strip. Any elements larger than $x$ must be contained in the labels of roots of north-strips that are to the right of the $x$-strip. Since labels travel to the southeast, elements from these root labels necessarily end up labeling external vertices to the northeast of $x$. Thus any $y>x$ must be to the \emph{left} of $x$ in $A$. This implies $x$ is a decrease giving us (i.), and moreover that $x \in \lrs(A)$.

Note that $\wt(T)=(\alpha\beta)^{n-r}\alpha^{-i}\beta^{-j}$ implies $T$ has $i$ $\alpha$-free north-strips and $j$ $\beta$-free west-strips. From the above argument, we get for free that if $z \in J$ for label $J$ of some a root in the $x$-strip and $z< J_{\max}$, then $z \not\in \lrs(A(T))$. 
Thus we obtain in particular that the number of blue-rooted trees, which is also the number of $\alpha$-free north-strips in $T$, is $|\lrs(A(T))|$. Consequently $|\lrs(A(T))| = i$, giving us (iii).

The proofs for (ii) and (iv) are symmetric, so we omit them.
\end{proof}

\begin{thm}\label{thm_bijection}
Let $\tilde{A}$ be an assembl\'{e}e and let $\tilde{T}$ be a rhombic alternative tableau. Then
\begin{equation}\label{eq_A}
\tilde{A} = A(T(\tilde{A})),
\end{equation}
and
\begin{equation}\label{eq_T}
\tilde{T} = T(A(\tilde{T})).
\end{equation}
\end{thm}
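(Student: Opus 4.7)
The plan is to promote the tile-by-tile duality between the fusion-exchange algorithm and the label-passing algorithm, already noted informally by the authors, into a global inversion. Everything hinges on a single local claim: for any tile, with its four edge labels and its interior symbol ($\alpha$, $\beta$, $q$, or empty), the fusion-exchange rule that produces the northwest-side labels from the southeast-side labels is the exact inverse of the label-passing rule that produces the southeast-side labels from the northwest-side labels. Once this local claim is granted, both equations \eqref{eq_A} and \eqref{eq_T} follow by propagating the inversion across the tiling.

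First I would verify this local claim by a short case analysis over the three fusion-exchange rules and the three tile types (square, tall, short). For example, in the canonical case of an $\alpha$ in a square tile (R(I)) the incoming label on the west edge is $B\cup A$ with $A\succ B$; label-passing rule II writes $B\cup A = C\cup D$ with $D\succ C$, forcing $D=A$ and $C=B$, and then sends $D$ east and $C$ south, which exactly recovers the east and south labels of R(I). The $\beta$-in-square-tile case is symmetric, using rule III. For the diagonal tiles, an $\alpha$ sits in a short tile and a $\beta$ in a tall tile, placing the branching vertex on a northwest-strip; label-passing rule I overrides and sends the larger piece $D$ along the southeast-pointing diagonal edge, which one checks again matches the fusion-exchange output. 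At $q$ tiles and at tiles where at least one edge carries $\emptyset$, the two labels simply pass through, which is self-inverse. This is the only step involving any computation.

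For equation \eqref{eq_A}, I would run fusion-exchange on $\tilde A$ and then identify the labels on the northwest boundary at termination with the root labels prescribed by Definition \ref{ordering}. Lemma \ref{label_order}, Lemma \ref{label_order_lem}, and the chain $K_v\succ\cdots\succ K_1\succ J_1\succ\cdots\succ J_t\succ (b_{r+1})\succ I_1\succ\cdots\succ I_u$ displayed in the remark give precisely the three defining properties of Definition \ref{ordering}; the only residual point is that the size of each termination label equals the number of external vertices of the corresponding binary tree, which follows from Lemma \ref{max_lemma} together with a conservation-of-labels count along each strip. The two root-labelings therefore agree, so label-passing initiated on $T(\tilde A)$ runs fusion-exchange backwards edge by edge; in particular, the external-vertex labels it outputs are the initiation labels of fusion-exchange, which are the entries of $\tilde A$. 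Reading them off as prescribed (with the diagonal external vertices as block-ends, and the special trivial root supplying the final block-end $b_{r+1}$) yields $A(T(\tilde A))=\tilde A$.

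Equation \eqref{eq_T} is the mirror image: starting from $\tilde T$, label the roots via Definition \ref{ordering}, run label-passing to build $A(\tilde T)$, and then apply fusion-exchange to $A(\tilde T)$ on the same tiling. Local inversion forces the two runs to produce identical labels on every edge, and since each interior symbol is uniquely determined by the local label data around its tile, $T(A(\tilde T))$ has the same tiles and the same $\alpha/\beta/q$ filling as $\tilde T$. The main obstacle throughout is the short- and tall-tile branching case, where rule I of label-passing displaces the $\alpha$- and $\beta$-specific rules; this is the only place where one must be careful about which of the two pieces in the split travels along the diagonal. Everything else reduces to applying the local inversion across the tableau, processing tiles from southeast to northwest in one direction and from northwest to southeast in the other.
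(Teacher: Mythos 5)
Your proposal is correct and follows essentially the same route as the paper: a tile-by-tile case analysis showing that each split in label-passing inverts the corresponding fusion in fusion-exchange, combined with the observation (via Lemma \ref{label_order_lem}) that the root labeling of Definition \ref{ordering} coincides with the termination labels of fusion-exchange. Your version is in fact somewhat more explicit than the paper's, notably in spelling out the diagonal-tile cases and the size-counting argument matching termination labels to the external vertices of each tree.
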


This theorem implies that the exchange fusion algorithm is a bijection from RAT of size $(n,r)$ to $\mathcal{A}_{n+1,r+1}$, and its inverse is the label-passing algorithm.

\begin{proof}
By construction, the steps of the label-passing algorithm in which labels \emph{split} accomplish the reverse of the steps of the exchange-fusion algorithm in which labels \emph{fuse}. We see this by considering all four possible cases of splitting/fusion in Figure \ref{fig_bij_vertex}. A branch of the label-passing algorithm with label $A \cup B$ for $B \succ A$ enters vertex $v$ contained in tile $\mathbf{t}$. The branches going out of $v$ have respective labels $A$ and $B$. In each of the four cases, labeling the south and east edges of $\mathbf{t}$ with the labels passed along the branches through these edges is the unique possible labeling according to the fusion-exchange algorithm. 

At all other points of the two algorithms, the labels remain unchanged on their corresponding trajectories. 

\begin{figure}[!h]
\centerline{\includegraphics[width=0.8\linewidth]{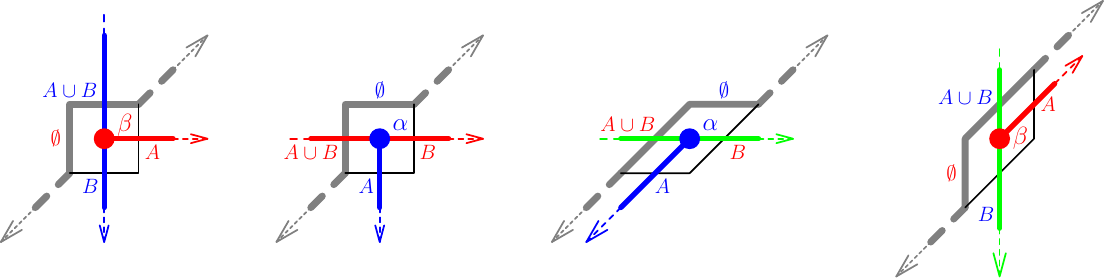}}
\centering
\caption{The splitting of $A \cup B$ in the label-passing algorithm at a vertex in tile $\mathbf{t}$ matches the fusion of two labels $B \succ A$ in the exchange fusion algorithm.}\label{fig_bij_vertex}
 \end{figure}
 
Furthermore by construction, at initiation of the label-passing algorithm, the labeling of the root edges matches precisely the ordering of labels at termination of the exchange fusion algorithm by Lemma \ref{label_order_lem}. These arguments, combined with Lemmas \ref{max_lemma}, \ref{no_alpha}, and \ref{no_beta} are enough to complete the proof.
\end{proof}

As a consequence of Theorem \ref{thm_bijection} and Theorem \ref{thm_reverse} according to the definitions of $\wt(T)$ and $\wt(A)$ for a rhombic alternative tableau $T$ and an assembl\'{e}e $A$, we obtain the following corollary.

\begin{cor}\label{cor_wt}
The fusion-exchange algorithm is a weight-preserving bijection with the label-passing algorithm as its inverse.
\end{cor}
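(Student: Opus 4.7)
The plan is to derive Corollary \ref{cor_wt} by assembling the three immediately preceding results. Theorem \ref{thm_bijection} has already established that $A \mapsto T(A)$ and $T \mapsto A(T)$ are mutually inverse maps between $\mathcal{A}_{n+1,r+1}$ and the set of RAT of size $(n,r)$, so the only remaining point is weight-preservation.

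First I would make the definition of $\wt(A)$ explicit, setting
\[
\wt(A) \;=\; \alpha^{n-r-|\lrs(A)|}\,\beta^{n-r-|\rls(A)|}
\]
at $q=1$, which is exactly the quantity appearing in Theorems \ref{forward_thm} and \ref{thm_reverse}. With this convention, Theorem \ref{forward_thm} gives $\wt(T(A)) = \wt(A)$ directly. For the reverse direction I use the elementary identity
\[
(\alpha\beta)^{n-r}\alpha^{-i}\beta^{-j} \;=\; \alpha^{n-r-i}\beta^{n-r-j}
\]
to rewrite the formula of Theorem \ref{thm_reverse} as $\wt(T) = \wt(A(T))$.

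Combining these two equalities with the bijectivity statement of Theorem \ref{thm_bijection} yields the corollary. I expect no genuine obstacle here: the hard combinatorial content, namely that $\lrs$ and $\rls$ count precisely the $\alpha$-free north-strips and $\beta$-free west-strips respectively, and that the fusion and splitting steps locally invert one another at each tile, has already been carried out in Lemma \ref{lrs_rls} and in the proofs of Theorems \ref{forward_thm}, \ref{thm_reverse}, and \ref{thm_bijection}. The only subtlety worth flagging explicitly is that weight-preservation is stated at the specialization $q=1$, since the assemblée weight as defined above does not carry a $q$-statistic; the placement of $q$'s in tiles (those where two nonempty labels cross without fusing, by rule R(III)) is automatically preserved by the tile-by-tile correspondence of Figure \ref{fig_bij_vertex}, so no additional argument is needed for the $q=1$ specialization to extend to the full weight if one later wishes to attach a matching $q$-statistic to $A$.
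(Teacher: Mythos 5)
Your proposal is correct and follows essentially the same route as the paper, which likewise derives the corollary by combining Theorems \ref{forward_thm}, \ref{thm_reverse}, and \ref{thm_bijection}. The only difference is that you make explicit the definition of $\wt(A)$ (which the paper invokes but never formally states) and the trivial rewriting $(\alpha\beta)^{n-r}\alpha^{-i}\beta^{-j}=\alpha^{n-r-i}\beta^{n-r-j}$; both are harmless and, if anything, clarifying.
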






\section{Weighted enumeration of assembl\'{e}es}\label{sec_assemblee}

In this section, we provide a weight-preserving bijection between assembl\'{e}es and a pair composed by the choice of a subset ${n \choose r}$ and an \emph{$r$--truncated subexceedant function} on $[n-r]$, which is enumerated by the product $(r+2)\ldots(n+1)$. This leads to a bijective proof of the following theorem.

\begin{thm}
Let $A \in \mathcal{A}_{n+1,r+1}$ be an assembl\'{e}e. Then
\[\sum_{A\ :\ \size(A)=(n+1,r+1)} \alpha^{-|\lrs(A)|}\beta^{-|\rls(A)|} = {n \choose r} \prod_{i=r}^{n-1} \left( \alpha^{-1}+\beta^{-1}+i \right).\]
\end{thm}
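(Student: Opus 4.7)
I would prove this theorem by constructing a weight-preserving bijection
$$\Phi:\mathcal{A}_{n+1,r+1}\xrightarrow{\;\sim\;} \binom{[n]}{r}\times\mathcal{F}^r_{n-r},$$
where $\mathcal{F}^r_{n-r}$ denotes the set of $r$-truncated subexceedant functions on $[n-r]$. The first step is to set up the right-hand side as a weighted enumeration: define an $r$-truncated subexceedant function to be a map $f:\{1,\ldots,n-r\}\to\{L,R\}\cup\mathbb{N}$ with $f(k)\in\{L,R,1,2,\ldots,r+k-1\}$, weighted by $w(f)=\prod_k w_k(f(k))$ where $w_k(L)=\alpha^{-1}$, $w_k(R)=\beta^{-1}$, and $w_k(j)=1$ for integer $j$. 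Summing over $\mathcal{F}^r_{n-r}$ gives $\prod_{k=1}^{n-r}(\alpha^{-1}+\beta^{-1}+r+k-1)=\prod_{i=r}^{n-1}(\alpha^{-1}+\beta^{-1}+i)$, and multiplying by $\binom{n}{r}$ recovers the right-hand side of the theorem.

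The bijection itself will be realized by an insertion algorithm. Given $A\in\mathcal{A}_{n+1,r+1}$, I extract $S(A)=\{b_2,\ldots,b_{r+1}\}\in\binom{[n]}{r}$, the $r$ smallest block-ends of $A$ (these lie in $[n]$ because $b_2<b_1\le n+1$). To construct $f(A)$, I iteratively peel off the $n-r$ non-block-end elements of $A$ in a canonical order (for instance, decreasing value), recording at each step $L$ if the removed element lies in $\lrs(A)$, $R$ if it lies in $\rls(A)$, and otherwise a numerical position encoding the element's location relative to the remaining sub-assembl\'ee. The inverse $\Phi^{-1}$ reconstructs $A$ from $(S,f)$ by starting from $r+1$ singleton blocks on $S\cup\{b_1\}$, where $b_1$ is the largest element of $[n+1]\setminus S$, and inserting elements one at a time; the value $f(k)$ specifies whether to insert at a distinguished left-extreme, right-extreme, or at one of $r+k-1$ interior slots of the current configuration at step $k$.

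The main verifications are: (a) at insertion step $k$ there are exactly $r+k+1$ admissible positions, split into two ``extremes'' (the $L$- and $R$-slots) and $r+k-1$ ``middles''; (b) an $L$-insertion creates precisely one new element of $\lrs$, an $R$-insertion creates precisely one new element of $\rls$, and a middle insertion leaves both statistics invariant; and (c) peeling and insertion are mutually inverse. Combined, these yield $\alpha^{-|\lrs(A)|}\beta^{-|\rls(A)|}=w(f(A))$ for every $A$, and summation over $\mathcal{A}_{n+1,r+1}$ then gives the claimed identity.

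The principal obstacle will be (b). The reference block-ends $b_1$ and $b_{r+1}$ that anchor the definitions of $\lrs$ and $\rls$ can themselves shift whenever a newly inserted element becomes a block-end, so one cannot locally check the increase/decrease status that determines membership in $\lrs$ or $\rls$. I anticipate proceeding by induction on $k$, with a case analysis tracking the relative order of the inserted element with the current $b_1$ and $b_{r+1}$, and whether it becomes a new block-end in the updated configuration; the delicate point is that an insertion which looks ``middle'' from the perspective of the sub-assembl\'ee may still affect how other elements compare to $b_1$ or $b_{r+1}$ in the final assembl\'ee, and the definitions of the extreme slots must be calibrated so that exactly the $\lrs$- and $\rls$-elements are created there.
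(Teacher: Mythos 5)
Your overall strategy matches the paper's: both decompose the weighted sum as $\binom{n}{r}$ times a weighted count of $r$-truncated subexceedant functions, with the two ``extreme'' insertion slots carrying the weights $\alpha^{-1}$ and $\beta^{-1}$. But there are two genuine problems with your execution. First, your choice of the $\binom{[n]}{r}$ component is wrong: you take $S(A)=\{b_2,\ldots,b_{r+1}\}$, the \emph{values} of the $r$ smallest block-ends. The fibers of this map are not equinumerous, so no bijection of the form you describe can exist. For $n=2$, $r=1$ there are $6$ assembl\'ees of size $(3,2)$; four of them ($[3][2,1]$, $[2][3,1]$, $[2,3][1]$, $[3,2][1]$) have $S=\{1\}$ and only two ($[3][1,2]$, $[1,3][2]$) have $S=\{2\}$, whereas each fiber would need size $|\mathcal{F}^1_1|=3$. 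Relatedly, $b_1$ is \emph{not} the largest element of $[n+1]\setminus S$ in general (in the paper's running example the block-ends are $7,6,4$ inside $[12]$), so your proposed inverse does not even reconstruct the block-end set. The paper avoids this by letting $\binom{n}{r}$ record the \emph{lateral positions} of the block-ends among the $n$ letters of the word (the points on the green lines), while the block-end values are determined by the insertion data $f$.

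Second, the verification you label (b) --- that an extreme insertion creates exactly one new element of $\lrs$ or $\rls$ and a middle insertion creates none --- is the entire content of the theorem, and you leave it as an ``anticipated'' induction. This step genuinely fails for the naive insertion: inserting an element above everything so far makes it larger than everything to its \emph{left}, i.e.\ a left-to-right record among the elements exceeding $b_1$, whereas membership in $\lrs(A)$ requires being larger than everything to its \emph{right} among those elements (and dually for $\rls$ and the elements below $b_{r+1}$). The paper resolves this with an explicit involution $\rho$ that reverses the subsequence $\lar(A)$ of elements above $b_1$ (a mirror image) and complements the subsequence $\sma(A)$ of elements below $b_{r+1}$, precisely converting the records produced by insertion into the $\lrs$/$\rls$ statistics. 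Some such device --- or a substantially recalibrated insertion --- is needed; as written, your proof has a hole exactly where the new idea is required.
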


\begin{defn}
A \emph{subexceedant function} on $[n]=[1,\ldots,n]$ is a function $f\ :\ [n] \rightarrow [n]$ such that $f(i)\leq i$ for each $i \in [n]$. An \emph{$r$--truncated subexceedant function} is such a function $f$ with $f(i)\leq i+r+1$.
\end{defn}

We define an insertion algorithm as follows. Let $f$ be an $r$--truncated subexceedant function on $[n-r]$. We begin with $r+1$ horizontal green lines at heights $1$ through $r+1$ from bottom to top. For $i\in [n-r]$, we insert element $i$ to the right of element $i-1$ in position $f(i)$ relative to the $r+1$ green lines and the elements $[i-1]$ which have already been inserted. In other words, if $f(i)=1$, then the $i$'th element is inserted below the other elements and the green lines, and if $f(i)=k$, then the $i$'th element is inserted above the element at height $k-1$.

Once all elements have been inserted, each is assigned a value from $[n+1]$ that corresponds to its height relative to the other elements and the green lines. Finally, a point is chosen on each of the green lines such that the points are located from top to bottom when read from left to right, and the point on the bottom-most green line is fixed to be to the right of the last inserted element. This selection fixes the locations of the block-ends in the assembl\'{e}e. Figure \ref{insertion} shows an example of this insertion of size $(11,3)$, where the $3$--truncated subexceedant function $f$ is defined by $f(1)=3,f(2)=5,f(3)=2,f(4)=6,f(5)=1,f(6)=9,f(7)=2,f(8)=1$.

It is easy to check that an $r$--truncated subexceedant function $f$ combined with a choice of the positions of the points on the green lines results in an assembl\'{e}e $A \in \mathcal{A}_{n+1,r+1}$. Note that there are ${n \choose r}$ ways to choose the points on the green lines, and each of those choices results in a distinct assembl\'{e}e. 

For the $r$-truncated subexceedent function $f$, the inserted elements are given colors and weights as follows:
\begin{itemize}
\item When $f(i)=1$, the $i$'th inserted element is colored \emph{red} and given weight $\beta^{-1}$.
\item When $f(i)=i+r+1$, the $i$'th inserted element is colored \emph{blue} and given weight $\alpha^{-1}$.
\item Otherwise, the element is colored black and given weight 1.
\end{itemize}

Notice that if $f(i)=1$, this means the $i$'th inserted element is below the green lines and every element to its left, and if $f(i)=i+r+1$, this means the $i$'th inserted element is above the green lines and every element to its left.

\begin{figure}[h]
 \begin{minipage}{0.46\linewidth}
  \centerline{\includegraphics[height=3.1in]{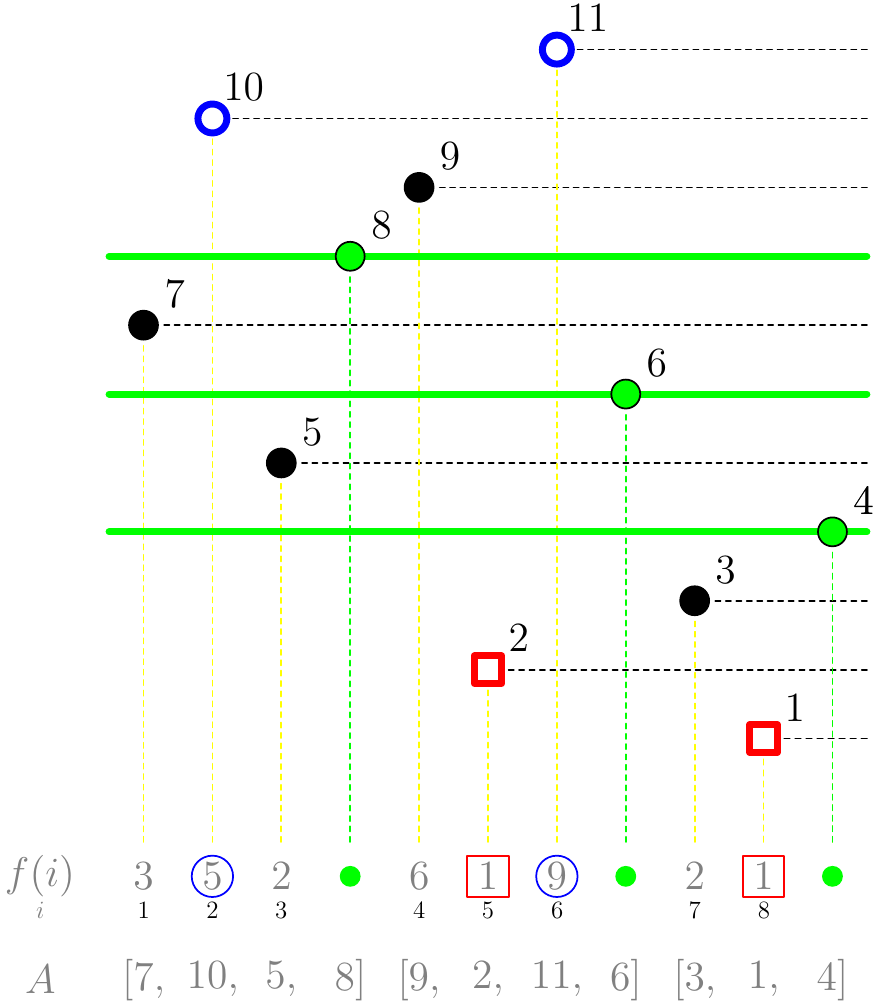}}
  \centering
\caption{Insertion given by $f(i)$ for $i=1,\ldots,8$ to obtain $A$, with blue circles and red squares representing elements with weights $\alpha^{-1}$ and $\beta^{-1}$ respectively, and the green lines and circles on them describing the block-ends.} \label{insertion}
 \end{minipage}
\begin{minipage}{0.08\linewidth}
  \centerline{\includegraphics[width=\linewidth]{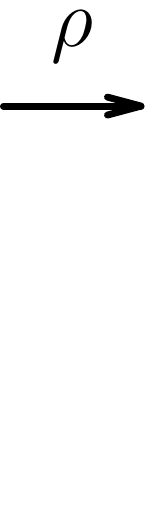}}
 \centering
 \end{minipage}
 \begin{minipage}{0.46\linewidth}
  \centerline{\includegraphics[height=3.1in]{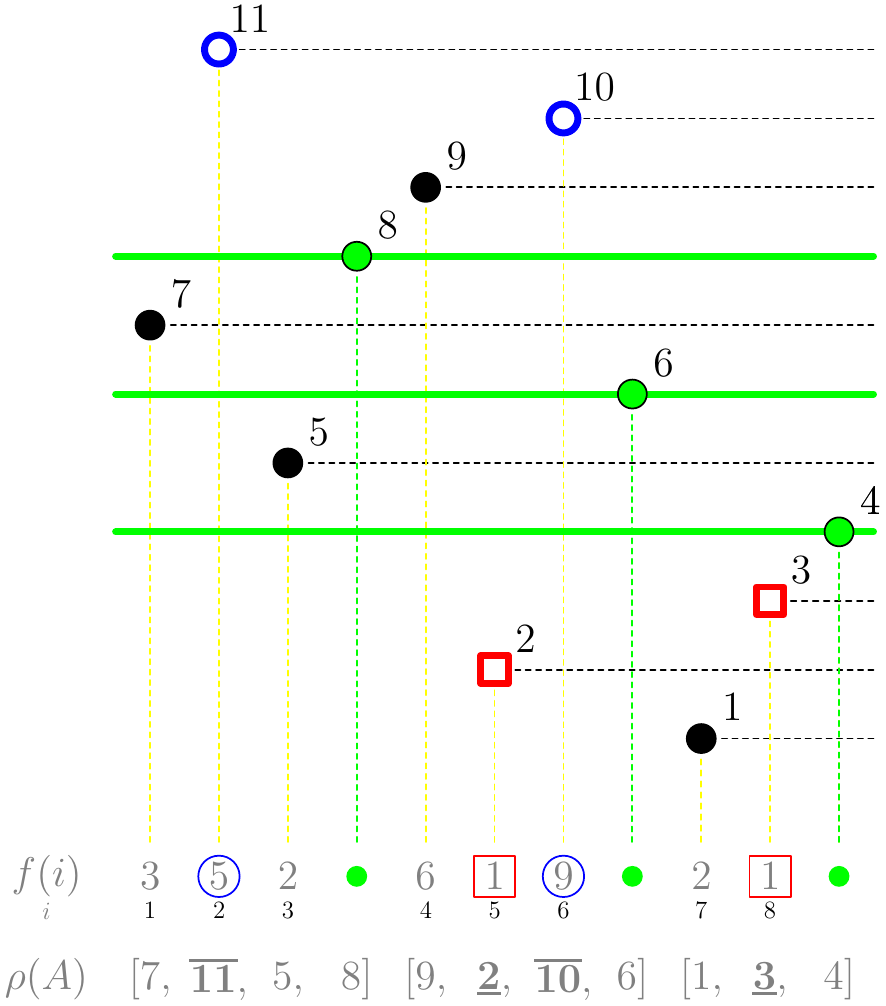}}
 \centering
  \caption{$\rho(A)$, with elements inserted with weights $\alpha^{-1}$ and $\beta^{-1}$ corresponding to the blue circles and red squares respectively, and also to the elements of $\lrs(\rho(A))$ (denoted by bar) and $\rls(\rho(A))$ (denoted by underline).}\label{transformation}
 \end{minipage}
\end{figure}

\begin{defn}
The total \emph{weight} of the final object is the product of the weights of all the inserted elements.
\end{defn}

For example, the weight of the assembl\'{e}e constructed in Figure \ref{insertion} is $\alpha^{-2}\beta^{-2}$.

Let $b_i$ be the relative height of the $i$'th green line from top to bottom for $1 \leq i \leq r+1$, and let $x_i$ be the relative height of the $i$'th inserted element for $1 \leq i \leq n-r$. An assembl\'{e}e $A$ is obtained by merging the sequence $[x_1,\ldots,x_{n-r}]$ with the sequence $[b_1,\ldots,b_{r+1}]$ so that the values $b_i$ are in the locations corresponding to the relative lateral placement of the green points in between the $x_i$'s, and they are set to be the block-ends of $A$ (See Figure \ref{insertion}). Now, this $A$ is \emph{almost} an assembl\'{e}e of size $(n+1,r+1)$ with the desired weight. To obtain the correct weight, we apply the transformation $\rho$ to $A$ guarantee that:
\begin{itemize}
\item The number of times an inserted element $x \in A$ was given weight $\alpha^{-1}$ equals $|\lrs(\rho(A))|$.
\item The number of times an inserted element $x \in A$ was given weight $\beta^{-1}$ equals $|\rls(\rho(A))|$.
\end{itemize}

\begin{defn}
Let $\mathbf{b}=[b_1,\ldots,b_{r+1}]$ be the sequence of block-end elements of $A$. Let the sequence $\lar(A) = [a_1,a_2,\ldots,a_u]$ consist of all the elements $a_i \in A$ such that $a_i>b_1$, where the order of the $a_i$'s matches the order of their appearance in $A$ from left to right. Similarly, let the sequence $\sma(A)=[c_1,c_2,\ldots,c_w]$ consist of all the elements $c_i$ such that $c_i<b_{r+1}$, where the order of the $c_i$'s matches the order of their appearance in $A$ from left to right. We define $\rho(A)$ to be the involution that replaces $a_i$ with $a_{u-i+1}$ for $1 \leq i \leq u$, replaces $c_i$ with $b_{r+1}-c_i$ for  $1 \leq i \leq w$, and leaves the rest of the entries of $A$ unchanged. We denote by $\rho(x)$ the element of $\rho(A)$, to which $x$ was sent, for $x \in A$
\end{defn}

\begin{remark} 
On $\lar(A)$, the transformation $\rho$ acts as the classical operation ``mirror image'' (in Figure \ref{transformation}, this means the points in $\lar(A)$ are reflected across a vertical axis), and on $\sma(A)$, $\rho$ acts as the classical operation ``complement'' (in Figure \ref{transformation}, this means the points in $\sma(A)$ are reflected across a horizontal axis). 
\end{remark}

\begin{example}
For $A = [7,10, 5, 8]\ [9, 2,11, 6]\ [3, 1, 4]$ with $\mathbf{b} = [8,6,4]$ from Figure \ref{insertion}, we have $\lar(A) = [10,9,11]$ and $\sma(A) = [2,3,1]$. In Figure \ref{transformation}, we obtain $\rho(\lar(A)) = [11, 9, 10]$ and $\rho(\sma(A)) = [2, 1, 3]$. Thus we obtain $\rho(A) = [7,11, 5, 8]\ [9, 2,10, 6]\ [1, 3, 4]$ with $\lrs(\rho(A)) = [11,10]$ and $\rls(\rho(A)) = [2,3]$, where the $\lrs$ and $\rls$ elements correspond precisely to those elements that were originally inserted with weight $\alpha^{-1}$ and $\beta^{-1}$ respectively.
\end{example}

\begin{thm}
Let $A$ be an assembl\'{e}e resulting from applying the insertion algorithm, and let $\wt_{\beta}$ and $\wt_{\alpha}$ be the number of times a non-block-end element $x \in A$ was given weight $\beta^{-1}$ and $\alpha^{-1}$ respectively. Then $|\rls(\rho(A))|=\wt_{\alpha}$ and $|\lrs(\rho(A))|=\wt_{\beta}$.
\end{thm}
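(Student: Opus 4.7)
The argument will proceed in three stages: (1) characterize the blue (weight $\alpha^{-1}$) and red (weight $\beta^{-1}$) elements of $A$ intrinsically, independent of $\rho$; (2) trace how the involution $\rho$ acts on the subsequences $\lar(A)$ and $\sma(A)$; (3) match the resulting statistics to the sets $\rls(\rho(A))$ and $\lrs(\rho(A))$ to obtain the claimed equalities.

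\textbf{Stage 1.} I will show that $x_i\in A$ is blue if and only if $x_i\in\lar(A)$ and $x_i$ is a left-to-right maximum among the non-block-end entries of $A$. The forward direction is direct: $f(i)=i+r+1$ places $x_i$ at the top of the step-$i$ configuration, so its final value strictly exceeds the final values of all objects (elements and green lines) placed at or before step $i$; in particular $x_i>b_1$ and $x_i>x_j$ for every $j<i$. For the converse, if $f(i)<i+r+1$ then some object lies strictly above $x_i$ at step $i$ and remains so, since subsequent insertions preserve the relative heights of already-placed objects. That obstructing object is either a green line (forcing $x_i<b_1$, so $x_i\notin\lar(A)$) or a previously inserted $x_j$ (forcing $x_j>x_i$, so $x_i$ is not a left-to-right maximum). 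A symmetric analysis identifies the red elements as exactly the left-to-right minima of the non-block-end subsequence that lie in $\sma(A)$.

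\textbf{Stage 2.} Writing $\lar(A)=[a_1,\ldots,a_u]$ and $\sma(A)=[c_1,\ldots,c_w]$ in their $A$-left-to-right order, the involution $\rho$ substitutes in place $a_i\mapsto a_{u-i+1}$ and $c_i\mapsto b_{r+1}-c_i$. Reading $\rho(A)$ from left to right, the $>b_1$-subsequence becomes the reversal $[a_u,\ldots,a_1]$ and the $<b_{r+1}$-subsequence becomes the complement $[b_{r+1}-c_1,\ldots,b_{r+1}-c_w]$. Reversal exchanges left-to-right maxima with right-to-left maxima; the complementation $c\mapsto b_{r+1}-c$ exchanges left-to-right minima with left-to-right maxima. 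Both operations are cardinality-preserving.

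\textbf{Stage 3.} Cross-referencing the paper's definitions (verified against the worked example in Figure \ref{transformation}): $\lrs(\rho(A))$ is the set of right-to-left maxima within the $>b_1$-subsequence of $\rho(A)$, while $\rls(\rho(A))$ is the set of left-to-right maxima within the $<b_{r+1}$-subsequence. Combining with Stages 1 and 2 through the $\rho$-image of the blue and red positions, the cardinalities $|\rls(\rho(A))|$ and $|\lrs(\rho(A))|$ align respectively with $\wt_\alpha$ and $\wt_\beta$, yielding the two equalities of the theorem.

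\textbf{Main obstacle.} The substantive work is in Stage 1, in particular the converse implications of the two biconditionals. One must carefully track whether the object blocking $x_i$ at step $i$ is a green line (which pins down $x_i<b_1$) or an earlier $x_j$ (which forces $x_i<x_j$), and dually for the red case with the bottom of the configuration. Once this intrinsic characterization is in place, Stages 2 and 3 reduce to direct unwinding of the formulas for $\rho$, $\lrs$, and $\rls$.
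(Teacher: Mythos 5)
Your proposal follows essentially the same route as the paper's proof: characterize the $\alpha^{-1}$-weighted elements as exactly the left-to-right maxima of $\lar(A)$ and the $\beta^{-1}$-weighted elements as the left-to-right minima of $\sma(A)$, then observe that the reversal on $\lar(A)$ and the complementation on $\sma(A)$ carry these bijectively onto the records defining $\lrs(\rho(A))$ and $\rls(\rho(A))$; the paper organizes this as three one-directional implications (blue, red, black) rather than two biconditionals, but the content is identical, and your reading of the paper's (nonstandard) $\lrs$/$\rls$ conventions is the right one. One caveat: your Stages 1--2 actually establish $|\lrs(\rho(A))|=\wt_{\alpha}$ and $|\rls(\rho(A))|=\wt_{\beta}$ --- which is what the paper's own proof, the bullet points preceding the theorem, and the worked example all confirm --- yet your closing sentence asserts the transposed pairing; the theorem as printed appears to have $\alpha$ and $\beta$ swapped, and you have inherited that transposition in your conclusion while in fact proving the corrected statement, so the last sentence of Stage 3 does not follow from your own argument as written.
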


\begin{proof}
We claim that $\rho$ implies the following characteristics.
\begin{enumerate}
\item[(i.)] If an element $x$ was inserted with weight $\alpha^{-1}$, then $\rho(x) \in \lrs(\rho(A))$. 
\item[(ii.)] If $x$ was inserted with weight $\beta^{-1}$, then $\rho(x) \in \rls(\rho(A))$.
\item[(iii.)] If $x$ was inserted with weight $1$, then $\rho(x) \not\in \lrs(A)$ and $\rho(x) \not\in\rls(\rho(A))$.
\end{enumerate}

(i.) If $x$ was inserted with weight $\alpha^{-1}$, then it is larger than $b_1$ and any element inserted before $x$, i.e. to its left. Thus when $\rho$ is applied to $A$, $\rho(x)$ is still larger than $b_1$, and also is larger than any element to its \emph{right} in $\rho(A)$. Consequently, $\rho(x) \in \lrs(\rho(A))$.

(ii.) If $x$ was inserted with weight $\beta^{-1}$, then it is smaller than $b_{r+1}$ and any element inserted before $x$, i.e. to its left. Thus when $\rho$ is applied to $A$, $\rho(x)$ is still smaller than $b_{r+1}$, and is now \emph{larger} than any element to its left in $\rho(A)$. Consequently, $\rho(x) \in \rls(\rho(A))$.

(iii.) If $x$ was inserted with weight $1$, then either $x \in \lar(A)$ and there is a larger element $x'$ to its left, or $x \in \sma(A)$ and there is a smaller element $x''$ to its left, or $b_1 > x > b_{r+1}$. In the first case, if $x \in \lar(A)$, then when $\rho$ is applied to $A$, we obtain $\rho(x')>\rho(x)$, with $\rho(x')$ to the right of $\rho(x)$, so $\rho(x) \not\in\lrs(\rho(A))$. In the second case, if $x \in \sma(A)$, then when $\rho$ is applied to $A$, we obtain $\rho(x'')>\rho(x)$, with $\rho(x'')$ to the left of $\rho(x)$, so $\rho(x) \not\in\rls(\rho(A))$. In the third case, by definition $x \not\in\lrs(A)$ and $x \not\in\rls(A)$.

The theorem follows.
\end{proof}

\section{Conclusion and further results}

Recall that in Section \ref{sec_intro}, we defined $\mathcal{Z}_{n,r}=\sum_{X \in B_n^r}\sum_{T \in R(\mathcal{T})}\wt(T)$. $\mathcal{Z}_{n,r}$ is also the partition function of the two-species ASEP with three parameters $\alpha$, $\beta$ and $q$ (i.e. the sum over the unnormalized steady state probabilities of all the states in $B_n^r$).

By combining the three bijections presented in Sections \ref{sec_bij} and \ref{sec_assemblee} of this paper, we directly get a bijective proof of the formula for the weight generating function for RAT with parameters $\alpha$ and $\beta$:
\[\mathcal{Z}_{n,r}(\alpha,\beta,q=1) = (\alpha\beta)^{n-r} {n \choose r} \prod_{i=r}^{n-1} \left( \alpha^{-1}+\beta^{-1}+i \right),\]
which is also the partition function of the two-species ASEP at $q=1$. 

It remains to find an interpretation of the parameter $q$ in terms of assembl\'{e}es. Furthermore, from the forest of binary trees of Section \ref{sec_reverse}, we can define an analog of the tree-like tableaux of \cite{treelike}. This tree-like analog on the RAT has vertices in each cell of the rhombic alternative tableau $T$ containing an $\alpha$ or $\beta$, and also at every edge on the north-west boundary of $T$ that is associated to a root edge, as in Figure \ref{reverse_init}. Such tableaux are in bijection with the RAT. From this paper, an analog of the insertion algorithm of \cite{treelike} should be defined for rhombic tree-like tableaux which would lead to a combinatorial interpretation of the parameter $q$ in terms of assembl\'{e}es of permutations.

\end{document}